\newtheorem{Coro}{Corollary}
\newtheorem{lemma}{Lemma}
\newtheorem{remark}{Remark}
\newcommand{\Z}{\mathbb{Z}}
\newcommand{\R}{\mathbb{R}}
\newcommand{\C}{\mathbb{C}}
\newcommand{\re}{\mbox{Re}}
\newcommand{\bfx}{\boldsymbol{x}}
\newcommand{\veck}{\boldsymbol{k}}
\newcommand{\vecx}{\boldsymbol{x}}
\newcommand{\vecalpha}{\boldsymbol{\alpha}}
\newcommand{\vecgamma}{\boldsymbol{\gamma}}
\newcommand{\vecbeta}{\boldsymbol{\beta}}
\newcommand{\svecbeta}{\boldsymbol{\scriptscriptstyle
 \beta}}
\newcommand{\vecE}{\boldsymbol{E}}
\newcommand{\vecC}{\boldsymbol{C}}
\newcommand{\vecI}{\boldsymbol{I}}
\newcommand{\vecf}{\boldsymbol{f}}
\newcommand{\vecg}{\boldsymbol{g}}
\newcommand{\vecF}{\boldsymbol{F}}
\newcommand{\vecL}{\boldsymbol{L}}
\newcommand{\vecH}{\boldsymbol{H}}
\newcommand{\vecX}{\boldsymbol{X}}
\newcommand{\vecK}{\boldsymbol{K}}
\newcommand{\vecu}{\boldsymbol{u}}
\newcommand{\vecw}{  \boldsymbol{w} }
\newcommand{\vecv}{  \boldsymbol{v}  }
\newcommand{\vecU}{\boldsymbol{U}}
\newcommand{\vecV}{\boldsymbol{V}}
\newcommand{\mdiv}{\mbox{div\,}}
\newcommand{\mcurl}{\mbox{curl\,}}
\begin{document}
\title{Determination of electromagnetic Bloch modes in a medium with frequency-dependent coefficients}
\author{C. Lackner\address{Institute for Analysis and Scientific Computing Wiedner Hauptstrasse 8-10 1040 Wien, Austria ({\tt{}christopher.lackner@tuwien.ac.at})}, S. Meng\address{Department of Mathematics, 2074 East Hall
530 Church Street
Ann Arbor, MI 48109-1043, USA ({\tt{}shixumen@umich.edu})} and P. Monk\address{Department of Mathematical Sciences, University of Delaware, Newark DE 19716, USA. ({\tt{}monk@udel.edu.})}}
\date{}
\maketitle

\begin{abstract}
We provide a functional framework and a numerical algorithm to compute the Bloch variety for Maxwell's equations when the electric permittivity is frequency dependent. We incorporate the idea of a mixed formulation for Maxwell's equations to obtain a quadratic eigenvalue for the wave-vector in terms of the frequency. We reformulate this
problem as a larger linear eigenvalue problem and prove that this results in the need to compute eigenvalues of a compact operator.  Using finite elements, we provide preliminary numerical examples of the scheme for both 
frequency independent and frequency dependent permittivity.
\end{abstract} 

{\bf Keywords}: Bloch variety, quadratic eigenvalue, composite materials, frequency-dependent materials.
\section{Introduction}  \label{Introduction}
Photonic crystals are engineered periodic structures designed to manage light (see for example \cite{toader2004photonic,JJWM}).  In particular, it is important to design
materials having band gaps: these are intervals of frequencies for which there is an absence of wave
propagation in any direction.  One way to quantify the band gap is via the dispersion relation or, more generally, the Bloch variety which represents the relationship between a possibly complex-valued wave vector and a possibly complex-valued frequency as outlined below.
Band gap information and the Bloch variety have applications in device design. We refer to \cite{kuchment2016overview} as well as the textbook \cite{JJWM} for more details.  

To fix ideas, let us now describe the electromagnetic Bloch variety problem in more detail.
We consider the propagation of electromagnetic waves in periodic media in $\R^3$. The electric field $\vecE$ and magnetic field $\vecH$ satisfy Maxwell's equations
\begin{eqnarray} \label{MengMonk2016MaxwellEqn}
\mcurl \vecE - i \omega \mu \vecH = 0, \quad \mcurl \vecH + i \omega \epsilon \vecE = 0,
\end{eqnarray}
where  $\epsilon \in L^\infty(\R^3)$ is the electric permittivity, $\mu \in L^\infty(\R^3)$ is the magnetic permeability, and $\omega$ is the angular frequency. We consider the case that the electric permittivity is allowed to be {\it frequency dependent} (and depend on position) so $\epsilon=\epsilon(x,\omega)$ where $x$ denotes position in $\mathbb{R}^3$,
and assume that $\mu=1$ since the relevant materials are not generally magnetic. In addition we assume that $\Re{\epsilon}$ is uniformly bounded below away from zero. 

The medium is assumed to have unit periodicity on a cubic lattice. The first Brillouin zone is assumed to be $[-\pi, \pi]^3$. Let $\Z = \{ 0, \pm 1, \pm 2, \dots\}$ and $\Lambda = \Z ^3$, we have
\begin{eqnarray*}
\epsilon(\vecx + n,\omega) = \epsilon (\vecx,\omega), \quad \mbox{{for a.e. }} \vecx \in \R^3, n \in \Lambda.
\end{eqnarray*}
We define the periodic domain as the quotient space $\Omega= \R^3 / \Lambda$. We remark that $\Omega$ has no boundary. 

Let $\vecu$ be defined by $\vecH(\bfx) =e^{i \veck \cdot \vecx} \vecu(\bfx)$ where $\veck$ is a given wave vector, then $\vecu(\bfx)$ is periodic and equation (\ref{MengMonk2016MaxwellEqn}) can be reduced to 
\begin{eqnarray} 
\mcurl_{\veck} \left( \frac{1}{\epsilon} \mcurl_{\veck} \vecu \right) = \omega^2 \vecu &\quad \mbox{in} \quad& \Omega, \label{MengMonk2016MaxwellCell1}\\
\mdiv_{\veck} \vecu = 0 &\quad \mbox{in} \quad& \Omega,\label{MengMonk2016MaxwellCell2}
\end{eqnarray}
where we use the following short-hand notation
\begin{eqnarray*}
\begin{aligned}
\mcurl_{\veck}  &= \mcurl + i {\veck} \times, \\
\mdiv_{\veck} &= \mdiv + i {\veck} \cdot,\\
\nabla_{\veck} &= \nabla + i {\veck}.
\end{aligned}
\end{eqnarray*}
The Bloch variety is the set of all pairs $\left(\veck, \omega \right)$  such that there exists a non-trivial periodic solution $\vecu$ to equations (\ref{MengMonk2016MaxwellCell1})-(\ref{MengMonk2016MaxwellCell2}). For more details we refer to \cite{kuchment2016overview}.

Usually the Bloch variety is computed assuming that the material in the photonic crystal has a real permittivity that is independent of frequency.  This is done by choosing the wave-vector $\veck$ above. Then equations (\ref{MengMonk2016MaxwellCell1})-(\ref{MengMonk2016MaxwellCell2}) becomes a linear eigenvalue problem for the
eigenpair $(\omega^2,\vecu)$. {Computing}
all possible values of $\omega$ {then reduces to finding} the eigenvalues of a self-adjoint 
compact operator.  This eigenvalue problem can be solved by discretizing the equations in the usual way using {conforming} edge finite elements to discretize $\vecu$ and vertex elements to discretize the Lagrange multiplier that imposes the
divergence condition (see for example \cite{dobson2001analysis,dobson2000efficient,boffi2006interpolation}). Other discretizations  are possible: for example, a Fourier basis is used in the widely used
open source package MPB~\cite{MPB}.  

However, there is also significant interest in computing the Bloch variety of frequency-dependent materials, for novel applications in optical metamaterials and dispersive photonic crystals \cite{valentine2008three,soukoulis2007negative,toader2004photonic,brule2016calculation,alagappan2013optical,bai2013efficient,effenberger2012linearization,engstrom2014spectral,engstrom2009spectrum,engstrom2010complex,gu2006applications,hermann2008photonic,ivchenko2006resonant,kaso2007nonlinear,diaz2015thz,raman2010photonic,rybin2016inverse,serebryannikov2015effect,sozuer1994photonic,park1999three,toader2004photonic,zheng2017frequency}. Electronic and vibrational excitations in a material may interact resonantly with an electromagnetic wave and dramatically alter its propagation through the medium. 

For frequency dependent coefficients, an alternative to computing the frequency for a given wave vector is possible: the Bloch variety can be computed {by finding all wave vectors} $\veck$ for a given frequency $\omega$ (and hence a given value of
$\epsilon(x,\omega)$ throughout the domain). This results  in a quadratic eigenvalue problem (see \cite{effenberger2012linearization,engstrom2014spectral,engstrom2009spectrum,engstrom2010complex} for the case of acoustic, TE or TM waves) which will be the focus of this paper. We refer to \cite{tisseur2001quadratic,guttel2017nonlinear} and the reference therein for discussions and surveys {devoted to}
 nonlinear eigenvalue problem. Algorithms for finding the Bloch variety for frequency-dependent electromagnetic propagation in three-dimensional composite materials are much less developed {than for the frequency indpedent case}. Difficulties arise from, for instance, from the fact that the divergence free condition of the Maxwell system has to be respected. The mixed formulation in \cite{dobson2001analysis,dobson2000efficient,boffi2006interpolation} provides a functional framework within which the divergence free condition is handled properly, and we shall show that this framework can also be applied when computing the wave vector for a given frequency. We then linearize the quadratic eigenvalue problem using a mixed-quadratic formulation. This results in a larger, non-self adjoint eigenvalue problem which we solve by the Arnoldi method~\cite{arnoldi}.  It is the larger size of the numerical problem that is the main drawback of the method.

There are alternatives to using the quadratic eigenvalue approach of this paper. In \cite{roman18}, a Drude model is assumed for the frequency dependence of 
the permitivity of the medium in part of the unit cell.  This allows the Bloch mode problem to be converted into a non-linear
eigenvalue (obviously this approach can be extended to other rational approximations of the permittivity).  The  SLEPc  package (see \cite{SLEPc}) is then used to compute the eigenvalues.  Our approach avoids the need to model the permittivity by a function. Another alternative, the ``cutting surface'' method of \cite{toader2004photonic} uses multiple solutions of the standard approach {(fixing $\veck$ and computing $\omega$ with a frozen coefficient)} together  with an approximation scheme that uses
a  plane wave basis to compute the Bloch variety in the frequency dependent case.

The main contributions of this paper are: 1) to formulate a new stabilized quadratic eigenvalue problem for the electromagnetic Bloch variety calculation, 2) to prove that the resulting problem can be linearized resulting in 
a linear eigenvalue problem for a compact self adjoint operator, and 3) to provide some preliminary numerical examples that illustrate the behavior of our method.  Future work will include a more detailed numerical study.

The outline of this paper is as follows.  In Section {\ref{decom}}, we define the function spaces used in this paper, summarize the Fourier analysis of the problem, and recall an important regularity result.  In Section~\ref{mixed} we propose a variational formulation for the quadratic eigenvalue problem strongly related to that of
\cite{dobson2001analysis, boffi2006interpolation} but with an additional constraint that we have found to be necessary
for numerical stability in our case. We also give our linearized eigenvalue problem and show that this is equivalent to the
original quadratic problem.  In Section~\ref{linear} we show that the linearized problem results in an eigenvalue problem for a compact operator (and hence has a discrete spectrum).  In Section~\ref{num}, we then give two examples of numerical results using the linearized problem.  In particular we show that the new method agrees with a standard finite element calculation of the Bloch variety when applied to a frequency independent problem.  We also show results for a frequency dependent problem similar to one in \cite{toader2004photonic}.   For this problem we also investigate
the convergence rate numerically.  Finally in Section~\ref{concl} we present some conclusions.

In this paper vectors, vector functions and vector function space are shown in bold-face.
\section{Decomposition and regularity}\label{decom}
To begin with, we introduce the following periodic versions of the vector Sobolev spaces:
\begin{eqnarray*}
H^1_{p} (\Omega) &=& \{ f \in L^2(\Omega) \, : \, \nabla f \in \vecL^2(\Omega),\\&&\qquad\mbox{{with $f$ one-periodic in $x_1$,\,$x_2$ and $x_3$}}\}, \\
\vecH_p (\mcurl ; \Omega) &=& \{ \vecu\in \vecL^2(\Omega) \, : \, \mcurl \vecu \in \vecL^2(\Omega)\\&&\qquad\mbox{{with $\vecu$ one-periodic in $x_1$,\,$x_2$ and $x_3$}} \}, \\
\vecH_p(\mdiv ; \Omega) &=& \{   \vecu\in \vecL^2(\Omega) \, : \, \mdiv \vecu \in L^2(\Omega) \\&&\qquad\mbox{{with $\vecu$ one-periodic in $x_1$,\,$x_2$ and $x_3$}}\}.
\end{eqnarray*}
{In the above definitions, the statement that a given function is one periodic is to be interpreted as meaning that the 
one-periodic extension of the given function or vector is locally in the given Sobolev space on $\R^3$.} In the above definitions, the subscript $p$ represents the periodic version. 

Now we summarize a Fourier analysis of vector-valued functions, and refer to \cite{dobson2001analysis} for more details. Any sufficiently regular 1-periodic vector function $\vecw \in \vecL^2(\Omega)$ can be represented as
\begin{eqnarray}
\vecw  = \sum_{\vecI \in J} e^{i \vecI\cdot \vecx} \vecC_{\vecI}, \label{FB}
\end{eqnarray}
where $J=\{ 2\pi(i_1,i_2,i_3): \mbox{for integers} \,\,  i_1, i_2, i_3 \}$, and $\vecC_{\vecI}\in\mathbb{C}^3$ is a vector-valued constant.  The Sobolev spaces of periodic functions can be characterized as following,
\begin{eqnarray*}
\vecH^s_p = \{ \vecu \in \vecL^2(\Omega): \, \vecu  = \sum_{\vecI \in J} e^{i \vecI\cdot \vecx} \vecC_{\vecI} \quad \mbox{and} \quad \sum_{\vecI \in J} (1+|\vecI|^2)^s |\vecC_{\vecI}|^2 < \infty \},
\end{eqnarray*}
and an equivalent $\vecH^s_p$-norm is also given by
\begin{eqnarray*}
\left( \sum_{\vecI \in J} |\vecgamma^{\vecI}|^{2s} |\vecC_{\vecI}|^2 \right) ^{\frac{1}{2}},
\end{eqnarray*}
where  $\vecgamma^{\vecI}={\vecbeta} + \vecI$ with $\vecbeta\not=0$ and $\vecbeta \in [-\pi, \pi]^3$. For a vector valued function $\vecw  = \sum_{\vecI \in J} e^{i \vecI\cdot \vecx} \vecC_{\vecI}$, the following identities hold
\begin{eqnarray*}
\mcurl_{\vecbeta} \vecw &=& \sum_{\vecI \in J} e^{i \vecI\cdot \vecx} N_{\vecI} \vecC_{\vecI}, \\
\mcurl_{\vecbeta} \mcurl_{\vecbeta} \vecw  &=& \sum_{\vecI \in J} e^{i \vecI\cdot \vecx} N_{\vecI} (N_{\vecI} \vecC_{\vecI}), 
\end{eqnarray*}
where 
\begin{eqnarray*}
N_{\vecI}= i 
\left( \begin{array}{ccc}
0 & - \gamma_3^{\vecI} & \gamma_2^{\vecI} \\
\gamma_3^{\vecI}  & 0 & -\gamma_1^{\vecI} \\
- \gamma_2^{\vecI} & \gamma_1^{\vecI} & 0
\end{array} \right)
\end{eqnarray*}
with $\vecgamma^{\vecI}=(\gamma^{\vecI}_1,\gamma^{\vecI}_2,\gamma^{\vecI}_3)$. For any $\vecC_{\vecI} \in \C^3$, the following identities hold
\begin{eqnarray*}
|N_{\vecI} \vecC_{\vecI}|^2+|\vecgamma^{\vecI} \cdot \vecC_{\vecI}|^2=|\vecgamma^{\vecI}|^2|\vecC_{\vecI}|^2,
\end{eqnarray*}
and in particular since $N_{\vecI} \vecgamma^{\vecI}=0$,
\begin{eqnarray} \label{MLMMaxwellDecompositionEqn}
|N_{\vecI} (N_{\vecI}\vecC_{\vecI})|^2= |N_{\vecI} (N_{\vecI}\vecC_{\vecI})|^2+|\vecgamma^{\vecI} \cdot (N_{\vecI} \vecC_{\vecI})|^2=|\vecgamma^{\vecI}|^2|N_{\vecI} \vecC_{\vecI}|^2.
\end{eqnarray}
We also need the following lemma from \cite{dobson2001analysis}. Let $\|\cdot\|_s$ denote the $\vecH^s(\Omega)$-norm where $s$ is any non-negative number, and $\|\cdot\|$ conveniently denotes the $\vecL^2(\Omega)$-norm.
\begin{lemma} \label{MengMonk2016DecompositionVectorPeriodic}
Let $\vecbeta$ be a non-zero vector in the first Brillouin zone $[-\pi, \pi]^3$. Give $\vecu \in \vecL^2(\Omega)$ there exists unique functions $\vecw \in \vecH^1_{p}(\Omega)$ and $\phi \in H^1_p(\Omega)$ satisfying
\begin{eqnarray*}
\vecu = \mcurl_{\vecbeta} \vecw + \nabla_{\vecbeta} \phi \quad \mbox{and} \quad \nabla_{\vecbeta} \cdot \vecw=0.
\end{eqnarray*}
Furthermore,
\begin{eqnarray*}
\|\vecw\|_1 + \|\phi\|_1 &\le& C \|\vecu\|, \\
\|\vecw\|_{s+1} &\le& C \|\mcurl_{\vecbeta} \vecw\|_s, \\
\|\phi\|_{s+1} &\le& C \|\nabla_{\vecbeta} \phi\|_s.
\end{eqnarray*}
\end{lemma}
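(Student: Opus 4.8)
The plan is to diagonalize the problem by passing to the Fourier representation \eqref{FB} and solving the decomposition one mode at a time in $\C^3$. Writing $\vecu = \sum_{\vecI\in J} e^{i\vecI\cdot\vecx}\vecC_{\vecI}$, I would first record that the hypothesis $\vecbeta\neq 0$ guarantees $\vecgamma^{\vecI}=\vecbeta+\vecI\neq 0$ for every $\vecI\in J$: if $\vecbeta+\vecI=0$ then $\vecbeta=-\vecI\in 2\pi\Z^3$, and the only such vector lying in $[-\pi,\pi]^3$ is $\vecbeta=0$. Hence for each $\vecI$ the real vector $\vecgamma^{\vecI}$ induces an orthogonal splitting of $\C^3$ into the longitudinal line $\mathrm{span}(\vecgamma^{\vecI})$ and the transverse plane $(\vecgamma^{\vecI})^{\perp}$, and because $\vecgamma^{\vecI}$ is real the functional $\vecC\mapsto\vecgamma^{\vecI}\cdot\vecC$ is unambiguous on complex vectors.

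The algebraic facts needed are already recorded: $N_{\vecI}\vecgamma^{\vecI}=0$, so $\mathrm{range}(N_{\vecI})\subseteq(\vecgamma^{\vecI})^{\perp}$, while the identity $|N_{\vecI}\vecC|^2+|\vecgamma^{\vecI}\cdot\vecC|^2=|\vecgamma^{\vecI}|^2|\vecC|^2$ shows that on the transverse plane $N_{\vecI}$ is an isometry up to the factor $|\vecgamma^{\vecI}|$, in particular a bijection of $(\vecgamma^{\vecI})^{\perp}$ onto itself with $|N_{\vecI}\vecv|=|\vecgamma^{\vecI}||\vecv|$ for $\vecv\perp\vecgamma^{\vecI}$. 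I would therefore define the Fourier coefficients of $\phi$ and $\vecw$ by projecting $\vecC_{\vecI}$: set $\phi_{\vecI}=-i(\vecgamma^{\vecI}\cdot\vecC_{\vecI})/|\vecgamma^{\vecI}|^2$ so that $i\vecgamma^{\vecI}\phi_{\vecI}$ is exactly the longitudinal part of $\vecC_{\vecI}$, and let $\vecW_{\vecI}=(N_{\vecI}|_{(\vecgamma^{\vecI})^\perp})^{-1}\bigl(\vecC_{\vecI}-i\vecgamma^{\vecI}\phi_{\vecI}\bigr)$ be the unique transverse vector with $N_{\vecI}\vecW_{\vecI}$ equal to the transverse part of $\vecC_{\vecI}$. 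By construction $\vecgamma^{\vecI}\cdot\vecW_{\vecI}=0$ for all $\vecI$, which in Fourier terms is precisely $\mdiv_{\vecbeta}\vecw=0$, and $N_{\vecI}\vecW_{\vecI}+i\vecgamma^{\vecI}\phi_{\vecI}=\vecC_{\vecI}$, i.e.\ $\mcurl_{\vecbeta}\vecw+\nabla_{\vecbeta}\phi=\vecu$. Uniqueness follows from the same splitting: if $\mcurl_{\vecbeta}\vecw+\nabla_{\vecbeta}\phi=0$ with $\mdiv_{\vecbeta}\vecw=0$, then testing the relation $N_{\vecI}\vecW_{\vecI}+i\vecgamma^{\vecI}\phi_{\vecI}=0$ against $\vecgamma^{\vecI}$ forces $\phi_{\vecI}=0$, after which injectivity of $N_{\vecI}$ on the transverse plane forces $\vecW_{\vecI}=0$.

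It remains to verify the three estimates, and the point is that the equivalent $\vecH^s_p$-norm $\bigl(\sum_{\vecI}|\vecgamma^{\vecI}|^{2s}|\vecC_{\vecI}|^2\bigr)^{1/2}$ turns each inequality into an elementary mode-by-mode bound. For the regularity estimates the bounds are in fact equalities: since $\vecW_{\vecI}$ is transverse, $|N_{\vecI}\vecW_{\vecI}|=|\vecgamma^{\vecI}||\vecW_{\vecI}|$, so $\sum_{\vecI}|\vecgamma^{\vecI}|^{2(s+1)}|\vecW_{\vecI}|^2=\sum_{\vecI}|\vecgamma^{\vecI}|^{2s}|N_{\vecI}\vecW_{\vecI}|^2$, which is $\|\vecw\|_{s+1}\le C\|\mcurl_{\vecbeta}\vecw\|_s$; the identical computation with $i\vecgamma^{\vecI}\phi_{\vecI}$ in place of $N_{\vecI}\vecW_{\vecI}$ gives $\|\phi\|_{s+1}\le C\|\nabla_{\vecbeta}\phi\|_s$. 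For the first estimate I would use $|\vecgamma^{\vecI}\phi_{\vecI}|=|\vecgamma^{\vecI}\cdot\vecC_{\vecI}|/|\vecgamma^{\vecI}|\le|\vecC_{\vecI}|$ together with $|\vecgamma^{\vecI}||\vecW_{\vecI}|=|\vecC_{\vecI}-i\vecgamma^{\vecI}\phi_{\vecI}|\le|\vecC_{\vecI}|$, so that $\|\vecw\|_1^2+\|\phi\|_1^2\le C\sum_{\vecI}|\vecC_{\vecI}|^2=C\|\vecu\|^2$; this bound simultaneously shows $\vecw\in\vecH^1_p(\Omega)$ and $\phi\in H^1_p(\Omega)$, so the series in fact converge in the stated spaces.

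The computations are routine once the Fourier reduction is in place, so the only genuine points requiring care are the invertibility of $N_{\vecI}$ on the transverse plane (which is exactly what $\vecbeta\neq 0$ buys us, via $\vecgamma^{\vecI}\neq 0$) and the legitimacy of the equivalent norm $\sum_{\vecI}|\vecgamma^{\vecI}|^{2s}|\vecC_{\vecI}|^2$. The latter rests on the uniform comparison $|\vecgamma^{\vecI}|^2\asymp 1+|\vecI|^2$, whose lower bound degenerates as $\vecbeta\to 0$; consequently the constant $C$ in the lemma depends on $\vecbeta$ through $\inf_{\vecI\in J}|\vecgamma^{\vecI}|$, and I would keep this dependence explicit. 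I expect this norm-equivalence bookkeeping, rather than the decomposition itself, to be the main thing to handle carefully.
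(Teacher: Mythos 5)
Your proof is correct, and it is essentially the intended argument: the paper itself states this lemma without proof (it is imported from \cite{dobson2001analysis}), but the Fourier machinery developed immediately before it --- the matrices $N_{\vecI}$, the identity $|N_{\vecI}\vecC_{\vecI}|^2+|\vecgamma^{\vecI}\cdot\vecC_{\vecI}|^2=|\vecgamma^{\vecI}|^2|\vecC_{\vecI}|^2$, and the equivalent $\vecH^s_p$-norm $\bigl(\sum_{\vecI}|\vecgamma^{\vecI}|^{2s}|\vecC_{\vecI}|^2\bigr)^{1/2}$ --- is exactly the toolkit your mode-by-mode argument assembles into a proof. Your closing caution is also well placed: the norm equivalence can only degenerate through the $\vecI=0$ mode, since for $\vecI\neq 0$ at least one component of $\vecgamma^{\vecI}$ has modulus at least $\pi$, so the constant depends on $\vecbeta$ precisely through $|\vecbeta|$, consistent with the lemma's hypothesis $\vecbeta\neq 0$.
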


\section{The mixed formulation}\label{mixed}
In this section, we first formulate equations (\ref{MengMonk2016MaxwellCell1}) -- (\ref{MengMonk2016MaxwellCell2}) using a mixed formulation. In practice it is often desired to compute the Bloch variety along specific directions in the first Brillouin zone.  So we assume that $\veck=\vecalpha_0+\lambda\hat\vecalpha$ where $\hat{\vecalpha}$ is a fixed unit wave vector and $\vecalpha_0$ is assumed to belong to the first Brillouin zone $[-\pi,\pi]^3$. Then to regularize the problem we introduce
parameters $\tau>0$ and $\eta$ such that $\lambda=\eta+\tau$ so that
\[
\veck=(\vecalpha_0+\tau\hat\vecalpha)+\eta\hat\vecalpha
\]
and denote by $\vecbeta=(\vecalpha_0+\tau\hat\vecalpha)$ the regularization vector. We assume that $\vecbeta$ belongs to the first Brillouin zone $[-\pi,\pi]^3$. For a fixed parameter $\tau$, we aim to compute $\eta$, and hence $\lambda$. 

To derive the mixed formulation, we multiply equation \eqref{MengMonk2016MaxwellCell1} by $\overline{\vecv}$ and integrate by parts
\begin{eqnarray*}
&&\int_\Omega \big( \mcurl + i \veck \times \big) \Big( \frac{1}{\epsilon}  \big(\mcurl \vecu+ i \veck \times \vecu \big) \Big) \cdot \overline{\vecv} dx -\omega^2 \int_\Omega \vecu \cdot \overline{\vecv} dx \\
&=& \int_\Omega   \frac{1}{\epsilon}  \big(\mcurl \vecu+ i \veck \times \vecu \big)  \cdot \big( \mcurl \overline{\vecv} -i \veck \times \overline{\vecv} \big) dx  -\omega^2 \int_\Omega \vecu \cdot \overline{\vecv} dx.
\end{eqnarray*}
Since $\veck$ can be complex-valued, $\mcurl \overline{\vecv} -i \veck \times \overline{\vecv}  = \overline{\mcurl_{\overline{\veck}} \vecv}$.
Now if $\veck=\vecbeta + \eta \hat{\vecalpha}$ where $\vecbeta$ is real-valued, a direct calculation yields
\begin{eqnarray}
&&\int_\Omega \big( \mcurl + i \veck \times \big) \Big( \frac{1}{\epsilon}  \big(\mcurl \vecu+ i \veck \times \vecu \big) \Big) \cdot \overline{\vecv}\, dx -\omega^2 \int_\Omega \vecu \cdot \overline{\vecv} \, dx \nonumber  \\
&=& \left( \epsilon^{-1} \mcurl_{\svecbeta} \vecu, \mcurl_{\svecbeta} \vecv \right) + \eta \left( \epsilon^{-1} i \hat{\vecalpha} \times \vecu, \mcurl_{\svecbeta} \vecv \right) + \eta \left( \epsilon^{-1} \mcurl_{\svecbeta} \vecu, i \hat{\vecalpha} \times \vecv \right)\nonumber  \\
&& - \omega^2 \left( \vecu, \vecv\right) + \eta^2 \left( \epsilon^{-1} i \hat{\vecalpha} \times \vecu,  i \hat{\vecalpha} \times \vecv \right), \label{mixed calculation 1}
\end{eqnarray}
where, for any suitable functions $\vecf$ and $\vecg$, {we define}
\[
(\vecf,\vecg)=\int_{\Omega}\vecf\cdot\overline{\vecg}\,dx.
\]
Note that $\vecu$ in addition satisfies  condition \eqref{MengMonk2016MaxwellCell2}, so that for any $q \in H^1_p(\Omega)$
\begin{eqnarray*}
0&=&\int_\Omega \big((\nabla + i\veck)\cdot \vecu \big) ~~ \overline{q} dx =  -\int_\Omega \vecu \cdot \big(  \nabla \overline{q} -i\veck \overline{q} \big) dx,
\end{eqnarray*}
and since $\veck=\vecbeta + \eta \hat{\vecalpha}$ and $\vecbeta$ is real-valued, we can rewrite this as
\begin{eqnarray}
 - \overline{\big( \nabla_{\vecbeta} q, \vecu \big)}- \eta \overline{\big( i \hat{\vecalpha} q, \vecu \big)}&=&0. \label{mixed calculation 2}
\end{eqnarray}
Now let us introduce a stable mixed formulation {using} \eqref{mixed calculation 1} -- \eqref{mixed calculation 2}. Let $H(\C)$ denote the space consisting of constant functions
on $\Omega$. We impose the additional constraint that $p$ may be chosen so that
\begin{equation}
\int_\Omega p\,dx=0\label{av0}.
\end{equation}
We can now introduce Lagrange multipliers to enforce (\ref{mixed calculation 2}) and (\ref{av0}).  We arrive at the 
the following problem: find {non-trivial} $(\vecu, p, s) \in \vecH_p (\mcurl ; \Omega) \times H^1_{p} (\Omega) \times H(\C)$ and $\eta \in \C$ such that
\begin{eqnarray}
&&\left( \epsilon^{-1} \mcurl_{\svecbeta} \vecu, \mcurl_{\svecbeta} \vecv \right) + \eta \left( \epsilon^{-1} i \hat{\vecalpha} \times \vecu, \mcurl_{\svecbeta} \vecv \right) \nonumber\\&&+ \eta \left( \epsilon^{-1} \mcurl_{\svecbeta} \vecu, i \hat{\vecalpha} \times \vecv \right) - \omega^2 \left( \vecu, \vecv\right) \label{improved mixed 1}  \\
&&+ \eta^2 \left( \epsilon^{-1} i \hat{\vecalpha} \times \vecu,  i \hat{\vecalpha} \times \vecv \right) + \left( \nabla_{\svecbeta} \, p, \vecv \right) + \eta \left(  i \hat{\vecalpha} p, \vecv  \right) + (p,t)= 0, \nonumber\\
&& \overline{\left( \nabla_{\svecbeta}\, q, \vecu \right) } + \overline{(q,s)} + \eta \overline{ \left(  i \hat{\vecalpha} q, \vecu  \right)} = 0, \label{improved mixed 2} 
\end{eqnarray}
for all $ \vecv \in \vecH_p (\mcurl ; \Omega), $ $q \in H^1_p(\Omega),$ and  $ t \in H(\C)$, where the terms $\left( \nabla_{\svecbeta} \, p, \vecv \right) + \eta \left(  i \hat{\vecalpha} p, \vecv  \right)$ and $(p,t)$ serve {to
define Lagrange multipliers } and result in a mixed variational formulation. 
\begin{remark}
It is necessary to introduce the {additional Lagrange multiplier $s$}. If \eqref{improved mixed 1}--\eqref{improved mixed 2} does not include the terms $(p,t)$ and $\overline{(q,s)}$, then $p$ can be any constant in the case when $\veck=0$; we have found that the resulting mixed formulation is then numerically unstable. 
\end{remark}
We can then easily prove the equivalence of the above mixed problem with the {original} Maxwell problem: 
\begin{lemma} \label{pde to improved mixed 1}
Assume that  $\omega\not=0$, $\Re \veck \in [-\pi,\pi]^3$, and $\Im \veck \in [-\pi,\pi]^3$. If $\vecu$ is a solution to \eqref{MengMonk2016MaxwellCell1}--\eqref{MengMonk2016MaxwellCell2}, then there exists $(p,s) \in H^1_p(\Omega) \times H(\C)$ such that $(\vecu,p,s)$ and $\eta$ satisfy the quadratic eigenvalue problem \eqref{improved mixed 1}--\eqref{improved mixed 2}. The converse statement also holds.
\end{lemma}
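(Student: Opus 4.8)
The plan is to prove the two implications separately, the first being essentially a restatement of the integration-by-parts computation \eqref{mixed calculation 1}--\eqref{mixed calculation 2} that produced the formulation. For the forward implication, given a solution $\vecu$ of \eqref{MengMonk2016MaxwellCell1}--\eqref{MengMonk2016MaxwellCell2} I would simply take $p=0$ and $s=0$. With this choice the constraint \eqref{av0} holds trivially, every Lagrange-multiplier term in \eqref{improved mixed 1}--\eqref{improved mixed 2} vanishes, and what remains is exactly the weak form recorded in \eqref{mixed calculation 1} (which equals zero precisely because $\vecu$ solves the curl--curl equation) together with the weak divergence identity \eqref{mixed calculation 2}. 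Hence $(\vecu,0,0)$ and $\eta$ solve the mixed problem.

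The converse is the substantive direction. First I would record two consequences of the test spaces. Taking $\vecv=0$ with $t\in H(\C)$ arbitrary in \eqref{improved mixed 1} gives $(p,t)=0$, hence $\int_\Omega p\,dx=0$, i.e. \eqref{av0}. Next I would rewrite \eqref{improved mixed 1} by collecting its first four terms into the single sesquilinear expression $(\epsilon^{-1}\mcurl_{\veck}\vecu,\mcurl_{\overline{\veck}}\vecv)$ and its two $p$-terms into $(\nabla_{\veck}p,\vecv)$; in the same way \eqref{improved mixed 2} becomes $\overline{(\nabla_{\overline{\veck}}q,\vecu)}+\overline{(q,s)}=0$, which upon integration by parts states that $\mdiv_{\veck}\vecu=s$ in the weak, hence $L^2$, sense.

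The heart of the proof is to show $p=0$. For this I would test \eqref{improved mixed 1} with curl-free fields $\vecv=\nabla_{\overline{\veck}}\psi$, $\psi\in H^1_p(\Omega)$, and $t=0$; since $\mcurl_{\overline{\veck}}\nabla_{\overline{\veck}}\psi=0$ the entire curl--curl contribution drops, leaving $-\omega^2(\vecu,\nabla_{\overline{\veck}}\psi)+(\nabla_{\veck}p,\nabla_{\overline{\veck}}\psi)=0$. Using the weak divergence identity just derived to evaluate $(\vecu,\nabla_{\overline{\veck}}\psi)$ in terms of $s$, and then specializing $\psi=p$, the $s$-dependent right-hand side is annihilated by the mean-zero constraint \eqref{av0}, so that $(\nabla_{\veck}p,\nabla_{\overline{\veck}}p)=0$; the non-degeneracy of the scalar problem, secured by keeping $\vecbeta$ real and nonzero and by the Poincar\'e-type bound $\|\phi\|_{s+1}\le C\|\nabla_{\vecbeta}\phi\|_s$ of Lemma~\ref{MengMonk2016DecompositionVectorPeriodic}, then forces $p=0$. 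Once $p=0$, testing the reduced \eqref{improved mixed 1} against $\vecv=\nabla_{\overline{\veck}}\psi$ gives $\omega^2(\vecu,\nabla_{\overline{\veck}}\psi)=0$; since $\omega\ne0$ this is the weak statement $\mdiv_{\veck}\vecu=0$, i.e. \eqref{MengMonk2016MaxwellCell2}, and comparing with $\mdiv_{\veck}\vecu=s$ yields $s=0$. With $p=s=0$, equation \eqref{improved mixed 1} reads $(\epsilon^{-1}\mcurl_{\veck}\vecu,\mcurl_{\overline{\veck}}\vecv)=\omega^2(\vecu,\vecv)$ for all $\vecv\in\vecH_p(\mcurl;\Omega)$, which is the weak form of \eqref{MengMonk2016MaxwellCell1}; undoing the integration by parts recovers the strong curl--curl equation.

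I expect the step establishing $p=0$ to be the main obstacle: the scalar multiplier is coupled to both $\vecu$ and $s$, and the sesquilinear form in $\eta$ is not Hermitian for complex $\veck$, so definiteness cannot be invoked naively. The decisive ingredients are the normalization \eqref{av0} (which kills the coupling to $s$ once $\psi=p$) and the non-degeneracy of $\nabla_{\vecbeta}$ guaranteed by Lemma~\ref{MengMonk2016DecompositionVectorPeriodic} together with the real, nonzero regularization vector $\vecbeta$; care is also required throughout with the complex conjugations relating $\veck$ and $\overline{\veck}$.
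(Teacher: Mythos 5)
Your forward direction, and the opening reductions of your converse (collecting \eqref{improved mixed 1} into $(\epsilon^{-1}\mcurl_{\veck}\vecu,\mcurl_{\overline{\veck}}\vecv)-\omega^2(\vecu,\vecv)+(\nabla_{\veck}p,\vecv)+(p,t)=0$, deducing $\int_\Omega p\,dx=0$ and $\mdiv_{\veck}\vecu=s$, and testing with gradient fields $\nabla_{\overline{\veck}}\psi$), are correct and run parallel to the paper. The genuine gap is the decisive inference ``$(\nabla_{\veck}p,\nabla_{\overline{\veck}}p)=0$ forces $p=0$.'' For complex $\veck$ this pairing is not a norm, nor even a real number: writing $p=\sum_{\vecI\in J}p_{\vecI}e^{i\vecI\cdot\vecx}$ and using Parseval,
\begin{equation*}
(\nabla_{\veck}p,\nabla_{\overline{\veck}}p)=\sum_{\vecI\in J}\bigl[(\vecI+\veck)\cdot(\vecI+\veck)\bigr]\,|p_{\vecI}|^{2},
\end{equation*}
where $(\vecI+\veck)\cdot(\vecI+\veck)=|\vecI+\Re\veck|^{2}-|\Im\veck|^{2}+2i(\vecI+\Re\veck)\cdot\Im\veck$ is the \emph{unconjugated} square. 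These coefficients are complex numbers whose real parts can take either sign (negative for $\vecI=0$ whenever $|\Im\veck|>|\Re\veck|$, positive for large $|\vecI|$) and whose imaginary parts vary as well, so a vanishing nonnegative combination of them does not force every $p_{\vecI}$ to vanish: cancellation across Fourier modes is possible. The Poincar\'e-type bound $\|\phi\|_{s+1}\le C\|\nabla_{\vecbeta}\phi\|_{s}$ of Lemma~\ref{MengMonk2016DecompositionVectorPeriodic} concerns the real vector $\vecbeta$ and the Hermitian norm $\|\nabla_{\vecbeta}\phi\|$; it says nothing about this indefinite bilinear quantity. This is exactly the non-Hermitian obstruction you flag yourself and then step around. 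A telltale symptom: your argument never uses the hypothesis $\Im\veck\in[-\pi,\pi]^{3}$, which the paper needs in an essential way.

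What the paper does instead is keep the full family of scalar test functions rather than specializing to $\psi=p$: from the distributional form of \eqref{improved mixed 1}--\eqref{improved mixed 2} it obtains $\mdiv_{\veck}\nabla_{\veck}p=\omega^{2}s$, which decouples mode by mode into $\omega^{2}s=p_{0}(i\veck)\cdot(i\veck)$ and $p_{\vecI}\,(i\vecI+i\veck)\cdot(i\vecI+i\veck)=0$ for every $\vecI\neq0$. The heart of the paper's proof is then the arithmetic non-degeneracy claim that $(i\vecI+i\veck)\cdot(i\vecI+i\veck)\neq0$ for all $\vecI\neq0$: vanishing would require $|\vecI+\Re\veck|^{2}=|\Im\veck|^{2}$ and $(\vecI+\Re\veck)\cdot\Im\veck=0$ simultaneously, and a case analysis using \emph{both} hypotheses $\Re\veck\in[-\pi,\pi]^{3}$ and $\Im\veck\in[-\pi,\pi]^{3}$ is invoked to rule this out. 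This gives $p_{\vecI}=0$ for $\vecI\neq0$; the constraint $(p,t)=0$ then kills the constant mode $p_{0}$, and $s=0$ follows from $\omega^{2}s=p_{0}(i\veck)\cdot(i\veck)$ since $\omega\neq0$. Your framework can be repaired along the same lines: test with each individual mode $\psi=e^{i\vecI\cdot\vecx}$ (instead of $\psi=p$) to obtain exactly these per-mode equations, and then supply the non-degeneracy argument. Without that ingredient---which is where the hypotheses of the lemma actually enter---the conclusion $p=0$ does not follow from the single aggregated identity you derive.
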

\begin{proof}
First suppose $\vecu$ is a solution to \eqref{MengMonk2016MaxwellCell1}--\eqref{MengMonk2016MaxwellCell2}, then from equation \eqref{mixed calculation 1}--\eqref{mixed calculation 2}, one can see that $(\vecu, 0,0) \in \vecH_p (\mcurl ; \Omega) \times H^1_{p} (\Omega) \times H(\C)$ and $\eta$ satisfy the quadratic eigenvalue problem \eqref{improved mixed 1}--\eqref{improved mixed 2}.

On the other hand suppose there exists $(p,s) \in H^1_p(\Omega) \times H(\C)$ such that $(\vecu,p,s)$ and $\eta$ satisfy the quadratic eigenvalue problem \eqref{improved mixed 1}--\eqref{improved mixed 2}, then the following holds in the distributional sense,
\begin{eqnarray} 
\mcurl_{\veck} \left( \frac{1}{\epsilon} \mcurl_{\veck} \vecu \right) - \omega^2 \vecu + \nabla_{\veck} p&=&0 \quad \mbox{in} \quad \Omega, \label{improved mixed equiv proof 1}\\
\mdiv_{\veck} \vecu &=& s \quad \mbox{in} \quad \Omega, \label{improved mixed equiv proof 2} \\
(p,t) &=&0 \quad \mbox{for any} \quad t \in H(\C). \label{improved mixed equiv proof 3}
\end{eqnarray}
Now applying $\mdiv_{\veck}$ to \eqref{improved mixed equiv proof 1} and noting \eqref{improved mixed equiv proof 2}
\begin{eqnarray*} 
\mdiv_{\veck} \nabla_{\veck} p=\omega^2 s &\quad \mbox{in} \quad& \Omega.
\end{eqnarray*}
To show that $\vecu$ is a solution to \eqref{MengMonk2016MaxwellCell1}--\eqref{MengMonk2016MaxwellCell2}, it remains to show that $p=0$ and $s=0$. In fact suppose that $p \in H^1_p(\Omega)$ has the following Fourier expansion
\begin{eqnarray*}
 p = \sum_{\vecI \in J} e^{i \vecI \cdot \vecx} p_{\vecI},
\end{eqnarray*}
then 
\begin{eqnarray} 
\omega^2 s=\mdiv_{\veck} \nabla_{\veck} p=  \sum_{\vecI \in J} e^{i \vecI \cdot \vecx} p_{\vecI}  (i\vecI + i\veck)\cdot  (i\vecI + i\veck)\label{spvanish}
\end{eqnarray}
holds in the distributional sense.  {We now show that this implies that $p$ and $s$ vanish:}
\begin{eqnarray*} 
\omega^2 s=  p_0 (i\veck)\cdot (i\veck), \quad \mbox{and} \quad p_{\vecI} (i\vecI + i\veck)\cdot (i\vecI + i\veck)=0 \quad \forall \vecI \not=0.
\end{eqnarray*}
\begin{enumerate}
\item[(a)] Note that $\Omega$ is the unit cell and $\re (\veck)$ belongs to the first Brillouin zone, then
 $(i\vecI + i\veck) \not =0$ for all $\vecI \in J$ and $\vecI \not=0$. Even {if} $(i\vecI + i\veck) \not =0$, {it is possible that} $(i\vecI + i\veck)\cdot (i\vecI + i\veck)$ might be zero since $\veck$ might be complex-valued. Since we restrict that $\Im \veck \in [-\pi,\pi]^3$, then we can show $(i\vecI + i\veck)\cdot (i\vecI + i\veck) \not=0$. Indeed, let $I+\veck =(r_1+i s_1,r_2+i s_2,r_3+i s_3)$, then
\begin{eqnarray*}
-(i\vecI + i\veck)\cdot (i\vecI + i\veck)&=& (r_1^2 + r_2^2+r_3^2 - s_1^2-s_1^2-s_1^2) \\
&&+ 2i (r_1s_1+r_2s_2+r_3s_3).
\end{eqnarray*}
Assume that $(i\vecI + i\veck)\cdot (i\vecI + i\veck)=0$, we show that this is a contradiction. First note that $(i\vecI + i\veck)\cdot (i\vecI + i\veck)=0$ gives
\begin{eqnarray*}
r_1^2 + r_2^2+r_3^2 = s_1^2+s_2^2+s_3^2, \quad r_1s_1+r_2s_2+r_3s_3=0.
\end{eqnarray*}
Since $r_j = i_j + \Re{\veck}_j$ for $j=1,2,3$ and $\Re{\veck} \in [-\pi,\pi]^3$, then 
\begin{eqnarray*}
r_1^2 + r_2^2+r_3^2 \ge 3 \pi^2.
\end{eqnarray*}
Since $\Im \veck \in [-\pi,\pi]^3$, then 
\begin{eqnarray*}
s_1^2+s_2^2+s_3^2 \le 3 \pi^2,
\end{eqnarray*}
and thereby $r_1^2 + r_2^2+r_3^2 = s_1^2+s_2^2+s_3^2$ holds only when 
\begin{eqnarray*}
r_1^2 + r_2^2+r_3^2 = s_1^2+s_2^2+s_3^2= 3 \pi^2, 
\end{eqnarray*} 
where the equations hold when $r_j=\pm \pi$ and $s_j=\pm \pi$ for $j=1,2,3$. However in this case $r_1s_1+r_2s_2+r_3s_3$ cannot be zero and this is a contradiction.

Now {since} $(i\vecI + i\veck)\cdot (i\vecI + i\veck) \not=0$, {equation (\ref{spvanish}) implies that}  $p_{\vecI}=0$ for all $\vecI\not=0$. Thus $p$ is a constant.
\item[(b)] Equation \eqref{improved mixed equiv proof 3} further implies that the constant $p$ has to be zero and hence $s=0$.
\end{enumerate} 
This proves the lemma.
\end{proof}
In order to compute the Bloch variety $(\veck, \omega)$, we first choose a fixed $\omega$, then we compute $\eta$ for a fixed unit wave-vector $\hat{\vecalpha}$ and a fixed regularization wave-vector $\vecbeta$.  Here let us remark that from the eigenvalue problem one can derive {$p=0$} and $s=0$ as in the above proof. In this sense it recovers the mixed formulation in \cite{dobson2001analysis, boffi2006interpolation}. 
\subsection{A linear eigenvalue problem}\label{linear}
For convenience let us denote by
\begin{eqnarray*}
\vecX(\Omega) = \vecH_p (\mcurl ; \Omega) \times \vecL^2 (\Omega).
\end{eqnarray*}
We now {obtain a linear eigenvalue problem from  the quadratic} problem \eqref{improved mixed 1}--\eqref{improved mixed 2}. In this regard we introduce {an auxilliary} function $\vecu_2 = \eta \vecu$. At the same time we {define} $\vecu_1= \vecu$ and denote $\vecU= (\vecu_1,\vecu_2)$. Then the quadratic eigenvalue problem \eqref{improved mixed 1}--\eqref{improved mixed 2} reduces to a   linear eigenvalue problem: find $(\vecU, p,s) \in \vecX(\Omega) \times H^1_{p} (\Omega) \times H(\C)$ and $\eta \in \C$ such that
\begin{eqnarray}
\lefteqn{\left( \epsilon^{-1} \mcurl_{\svecbeta} \vecu_1, \mcurl_{\svecbeta} \vecv_1 \right) + \left( \epsilon^{-1} i \hat{\vecalpha} \times \vecu_2, \mcurl_{\svecbeta} \vecv_1 \right)}\nonumber\\&& + \eta \left( \epsilon^{-1} \mcurl_{\svecbeta} \vecu_1, i \hat{\vecalpha} \times \vecv_1 \right) - \omega^2 \left( \vecu_1, \vecv_1\right)\label{MengMonk2016MaxwellMixedQuadratic1}  \\
&&+ \eta \left( \epsilon^{-1} i \hat{\vecalpha} \times \vecu_2,  i \hat{\vecalpha} \times \vecv_1 \right) + \left( \nabla_{\svecbeta} p, \vecv_1 \right) + \eta \left(  i \hat{\vecalpha} p, \vecv_1  \right) +(p,t)= 0, \nonumber \\
&& \overline{\left( \nabla_{\svecbeta} q, \vecu_1 \right) } +\overline{(q,s)}+ \eta \overline{ \left(  i \hat{\vecalpha} q, \vecu_1  \right)} = 0. \label{MengMonk2016MaxwellMixedQuadratic2} 
\end{eqnarray}
for all {$ (\vecV,q,t) \in \vecX(\Omega) \times H^1_p(\Omega)\times H(\C)$}.\\

{For convenience we now} introduce the following sesquilinear forms. Let 
\begin{eqnarray*}
a_{1}(\vecU; \vecV) &=& \left( \epsilon^{-1} \mcurl_{\svecbeta} \vecu_1, \mcurl_{\svecbeta} \vecv_1 \right) + \left( \epsilon^{-1} i \hat{\vecalpha} \times \vecu_2, \mcurl_{\svecbeta} \vecv_1 \right)  \\
&&- \omega^2 \left( \vecu_1, \vecv_1\right) + M \left( \vecu_2, \vecv_2 \right) , \\
a_{2} (\vecU; \vecV)  &=& \left( \epsilon^{-1} i \hat{\vecalpha} \times \vecu_2,  i \hat{\vecalpha} \times \vecv_1 \right) - M \left( \vecu_1, \vecv_2 \right) \\&&\qquad+ \left( \epsilon^{-1} \mcurl_{\svecbeta} \vecu_1, i \hat{\vecalpha} \times \vecv_1 \right), \\
b_{1}(p; \vecV)  &=&\left( \nabla_{\svecbeta} p, \vecv_1 \right), \\
b_{2}(p; \vecV)  &=&  \left(  i \hat{\vecalpha} p, \vecv_1  \right), \\
c_{1} (s; q) &=& \left( q,s \right),
\end{eqnarray*}
where  $M>0$ is a constant. Here we remark that the sesquilinear forms all depend upon $\vecbeta$, we omit the sub-script $\vecbeta$ as it is clear throughout the paper. 

The linear eigenvalue problem (\ref{MengMonk2016MaxwellMixedQuadratic1})--(\ref{MengMonk2016MaxwellMixedQuadratic2}) then conveniently reads: find {non-trivial} $(\vecU,p,s) \in \vecX(\Omega)\times H^1_{p} (\Omega)\times H(\C)$ and $\eta \in \C$ such that
\begin{eqnarray}
a_{1}(\vecU; \vecV) + b_{1}(p; \vecV) + c_1(t;p) &=& -\eta \left( a_{2}(\vecU; \vecV) + b_{2}(p; \vecV)\right), \label{MengMonk2016MaxwellMixedQuadratic11} \\
 \overline{b_{1}(q; \vecU)} + \overline{c_{1}(s;q) } &=& - \eta \,\overline{ b_{2}(q; \vecU)  }, \label{MengMonk2016MaxwellMixedQuadratic12} 
\end{eqnarray}
for all $(\vecV,q,t) \in  {\vecX(\Omega)\times H^1_{p} (\Omega)\times H(\C)}$. The next lemma verifies 
our claim that this system is equivalent to the original problem.
\begin{lemma}
The quadratic eigenvalue problem \eqref{improved mixed 1}--\eqref{improved mixed 2}  is equivalent to the linear eigenvalue problem (\ref{MengMonk2016MaxwellMixedQuadratic11})--(\ref{MengMonk2016MaxwellMixedQuadratic12}) .
\end{lemma}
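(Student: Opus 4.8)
The plan is to establish the equivalence in both directions, with the entire argument hinging on recovering the linearization relation $\vecu_2=\eta\vecu_1$. The decisive observation is that the terms weighted by the constant $M$ in $a_1$ and $a_2$ have been engineered precisely so that testing the first linear equation against functions supported only in the second component encodes this relation, while the remaining structure of \eqref{MengMonk2016MaxwellMixedQuadratic11}--\eqref{MengMonk2016MaxwellMixedQuadratic12} reproduces the quadratic problem once the substitution is made.

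For the forward implication, I would suppose that $(\vecu,p,s)$ and $\eta$ solve the quadratic problem \eqref{improved mixed 1}--\eqref{improved mixed 2}, set $\vecu_1=\vecu$, $\vecu_2=\eta\vecu$, and $\vecU=(\vecu_1,\vecu_2)$, and then check \eqref{MengMonk2016MaxwellMixedQuadratic11}--\eqref{MengMonk2016MaxwellMixedQuadratic12} by splitting the test space. Taking $\vecV=(0,\vecv_2)$ with $q=t=0$, the only surviving contributions in \eqref{MengMonk2016MaxwellMixedQuadratic11} are $M(\vecu_2,\vecv_2)$ on the left and $\eta M(\vecu_1,\vecv_2)$ on the right, which agree identically since $\vecu_2=\eta\vecu_1$. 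Taking instead $\vecv_2=0$ with $\vecv_1,q,t$ arbitrary, the $M$-terms drop out and I substitute $\vecu_2=\eta\vecu_1$ into the rest; the extra factor of $\eta$ carried by $i\hat{\vecalpha}\times\vecu_2$ combines with the $-\eta$ on the right-hand side to produce the $\eta^2$ term, and the display collapses term-by-term onto \eqref{improved mixed 1}. Equation \eqref{MengMonk2016MaxwellMixedQuadratic12} is \eqref{improved mixed 2} verbatim once $\vecu_1=\vecu$.

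For the converse, I would suppose that $(\vecU,p,s)=((\vecu_1,\vecu_2),p,s)$ and $\eta$ solve the linear problem, and first test \eqref{MengMonk2016MaxwellMixedQuadratic11} with $\vecV=(0,\vecv_2)$ and $q=t=0$. This isolates $M(\vecu_2,\vecv_2)=\eta M(\vecu_1,\vecv_2)$ for all $\vecv_2\in\vecL^2(\Omega)$; since $M>0$ and $\vecv_2$ ranges over all of $\vecL^2(\Omega)$, it forces $\vecu_2=\eta\vecu_1$ almost everywhere. With this relation in hand I set $\vecu=\vecu_1$ and take $\vecv_2=0$: the $M$-terms vanish, $\vecu_2$ is replaced by $\eta\vecu_1$ everywhere it appears, and \eqref{MengMonk2016MaxwellMixedQuadratic11}--\eqref{MengMonk2016MaxwellMixedQuadratic12} reduce to exactly \eqref{improved mixed 1}--\eqref{improved mixed 2}.

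The argument is essentially bookkeeping once the role of the $M$-weighted terms is recognized, so the only step I would treat as genuinely requiring care—and hence the crux—is the recovery of $\vecu_2=\eta\vecu_1$ in the converse direction. This is where $M>0$ is indispensable: it guarantees that the second-component test is non-degenerate, so that the full range of $\vecv_2\in\vecL^2(\Omega)$ yields the pointwise identity $\vecu_2=\eta\vecu_1$ rather than merely a weak statement on a restricted subspace. Everything else then follows by matching terms across the two formulations.
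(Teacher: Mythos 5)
Your proposal is correct and follows essentially the same route as the paper: the paper's proof also tests \eqref{MengMonk2016MaxwellMixedQuadratic11} with $\vecV=(0,\vecv_2)$ and $t=0$ to isolate $M(\vecu_2,\vecv_2)-\eta M(\vecu_1,\vecv_2)=0$, concludes $\vecu_2=\eta\vecu_1$ from $M>0$, and then substitutes back to recover \eqref{improved mixed 1}--\eqref{improved mixed 2}. The only cosmetic difference is that you spell out the forward direction as well, whereas the paper treats it as immediate from the construction of the linearization.
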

\begin{proof}
It is sufficient to show that  the linear eigenvalue problem (\ref{MengMonk2016MaxwellMixedQuadratic11}) -- (\ref{MengMonk2016MaxwellMixedQuadratic12}) yields the quadratic eigenvalue problem \eqref{improved mixed 1}--\eqref{improved mixed 2}. Indeed taking the test function $\vecV=(0,\vecv_2)$ and $t=0$ yield that
\begin{eqnarray*}
M \left( \vecu_2, \vecv_2 \right)  - \eta M \left( \vecu_1, \vecv_2 \right)  =0.
\end{eqnarray*}
This shows that $\vecu_2=\eta \vecu_1$. Plugging $\vecu_2=\eta \vecu_1$ into (\ref{MengMonk2016MaxwellMixedQuadratic11}) -- (\ref{MengMonk2016MaxwellMixedQuadratic12}) yields the quadratic eigenvalue problem \eqref{improved mixed 1}--\eqref{improved mixed 2}.
\end{proof}
\section{Analysis of the linear eigenvalue problem}
Our goal is now to show that linear eigenvalue problem {(\ref{MengMonk2016MaxwellMixedQuadratic11})--(\ref{MengMonk2016MaxwellMixedQuadratic12})} {is equivalent to}  an eigenvalue problem for a compact operator and is thus
appropriate for numerical analysis. We start by introducing the following source problem: find $(\vecU,p,s) \in  \vecX(\Omega) \times H^1_{p} (\Omega)\times H(\C) $ and $\eta \in \C$ such that 
\begin{eqnarray}
a_{1}(\vecU; \vecV) + b_{1}(p; \vecV) + c_{1}(t,p) &=& a_{2}(\vecF; \vecV) + b_{2}(g; \vecV), \label{MengMonk2016MaxwellMixedQuadratic21} \\
 \overline{b_{1}(q; \vecU) } +  \overline{c_{1}(s;q) } &=&  \overline{ b_{2}(q; \vecF)  }, \label{MengMonk2016MaxwellMixedQuadratic22} 
\end{eqnarray}
{for all $(\vecV,q,t) \in  \vecX(\Omega)\times H^1_{p} (\Omega)\times H(\C)$}
where $(\vecF,g) \in  \vecX(\Omega)\times L^2(\Omega)$ {are given functions}. For convenience let us introduce the kernel space
\begin{eqnarray*}
\vecK_{\svecbeta} = \{ \vecV \in   \vecX(\Omega): \quad b_{1}(p; \vecV)=0, \quad \forall \quad p \in H^1_{p} (\Omega) \}.
\end{eqnarray*}
It is readily seen that the kernel $\vecK_{\svecbeta}$ consists of $(\vecv_1,\vecv_2) \in   \vecX(\Omega)$ such that $\nabla_{\svecbeta} \cdot \vecv_1 =0$.

{We now show that the sesquilinear form $a_1(\cdot,\cdot)$ is coercive on $\vecK_{\svecbeta} $:}
\begin{lemma} \label{MengMonk2016MaxwellMixedQuadraticCovercivity}
Let $\vecbeta$ be choosen such that $ \inf_{\vecx\in\Omega}\, |\Re{\epsilon^{-1}(\vecx)}| |\vecgamma^{\vecI}|^2 - \omega^2 > 0$ for any $\vecI \in J$. There exists a sufficiently large $M>0$ such that $a_{1}(\vecU; \vecV)$ satisfies the coercivity condition on $\vecK_{\svecbeta}$, i.e. for any $\vecV \in \vecK_{\svecbeta}$
\begin{eqnarray*}
\Re{ a_{1}(\vecV; \vecV) }\ge C \|\vecV\|^2_{ \vecX(\Omega)},
\end{eqnarray*}
where $C$ is a constant.
\end{lemma}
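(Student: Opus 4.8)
The plan is to test the form with $\vecU=\vecV$, take real parts, and then show that each potentially indefinite contribution is dominated by the coercive curl term once $M$ is chosen large. For $\vecV=(\vecv_1,\vecv_2)\in\vecK_{\svecbeta}$ one has
\begin{equation*}
\Re a_1(\vecV;\vecV)=\int_\Omega \Re(\epsilon^{-1})\,|\mcurl_{\svecbeta}\vecv_1|^2\,dx+\Re\left(\epsilon^{-1} i\hat{\vecalpha}\times\vecv_2,\mcurl_{\svecbeta}\vecv_1\right)-\omega^2\|\vecv_1\|^2+M\|\vecv_2\|^2,
\end{equation*}
so the three objects to manage are the indefinite cross term, the negative term $-\omega^2\|\vecv_1\|^2$, and the sign of the $\|\vecv_2\|^2$ contribution.

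First I would record the pointwise bound $\Re(\epsilon^{-1})\ge\epsilon_0>0$, which follows from the standing assumption that $\Re\epsilon$ is bounded below away from zero together with $\epsilon\in L^\infty$; this controls the curl term from below by $\epsilon_0\|\mcurl_{\svecbeta}\vecv_1\|^2$. Next, since $\vecV\in\vecK_{\svecbeta}$ means $\nabla_{\svecbeta}\cdot\vecv_1=0$, expanding $\vecv_1=\sum_{\vecI\in J}e^{i\vecI\cdot\vecx}\vecC_{\vecI}$ forces $\vecgamma^{\vecI}\cdot\vecC_{\vecI}=0$, and the identities preceding \eqref{MLMMaxwellDecompositionEqn} give $\|\mcurl_{\svecbeta}\vecv_1\|^2=\sum_{\vecI}|\vecgamma^{\vecI}|^2|\vecC_{\vecI}|^2$ while $\|\vecv_1\|^2=\sum_{\vecI}|\vecC_{\vecI}|^2$. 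Hence $\epsilon_0\|\mcurl_{\svecbeta}\vecv_1\|^2-\omega^2\|\vecv_1\|^2=\sum_{\vecI}(\epsilon_0|\vecgamma^{\vecI}|^2-\omega^2)|\vecC_{\vecI}|^2$, and the hypothesis, together with $|\vecgamma^{\vecI}|\to\infty$, yields a uniform spectral gap $c_0:=\inf_{\vecI\in J}(\epsilon_0|\vecgamma^{\vecI}|^2-\omega^2)>0$.

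The remaining point is to keep a fraction of the curl coercivity in reserve to absorb the cross term. I would fix a small $\theta\in(0,1)$ and split each mode as $\epsilon_0|\vecgamma^{\vecI}|^2-\omega^2=\theta\epsilon_0|\vecgamma^{\vecI}|^2+[(1-\theta)\epsilon_0|\vecgamma^{\vecI}|^2-\omega^2]$, with $\theta$ small enough that the bracket stays $\ge c_0/2$ for every $\vecI$ (possible since the minimizing modes have bounded $|\vecgamma^{\vecI}|$). This produces
\begin{equation*}
\int_\Omega\Re(\epsilon^{-1})|\mcurl_{\svecbeta}\vecv_1|^2\,dx-\omega^2\|\vecv_1\|^2\ge\theta\epsilon_0\|\mcurl_{\svecbeta}\vecv_1\|^2+\tfrac{c_0}{2}\|\vecv_1\|^2.
\end{equation*}
I would then bound the cross term using $|\hat{\vecalpha}\times\vecv_2|\le|\vecv_2|$, Cauchy--Schwarz and Young's inequality by $\tfrac{\theta\epsilon_0}{2}\|\mcurl_{\svecbeta}\vecv_1\|^2+\tfrac{\|\epsilon^{-1}\|_\infty^2}{2\theta\epsilon_0}\|\vecv_2\|^2$, absorbing the curl part into the reserve. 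Finally, choosing $M\ge\|\epsilon^{-1}\|_\infty^2/(\theta\epsilon_0)$ makes the net coefficient of $\|\vecv_2\|^2$ at least $M/2>0$, giving $\Re a_1(\vecV;\vecV)\ge C(\|\mcurl_{\svecbeta}\vecv_1\|^2+\|\vecv_1\|^2+\|\vecv_2\|^2)$, which is the claim after noting that the $\mcurl_{\svecbeta}$- and $\mcurl$-norms are equivalent on $\vecH_p(\mcurl;\Omega)$.

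I expect the main obstacle to be the indefinite cross term coupled to the spatially varying coefficient: because $\epsilon^{-1}$ is not constant one cannot diagonalize $a_1$ directly in Fourier, so the argument must first drop to the scalar lower bound $\epsilon_0$ and only then exploit the Fourier and divergence-free structure, all while reserving enough curl coercivity both for the Poincar\'e-type control of $\|\vecv_1\|$ (through the spectral-gap hypothesis) and for absorbing the cross term. Balancing these two demands through the choice of $\theta$ and the size of $M$ is the delicate part of the argument.
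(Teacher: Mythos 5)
Your proof is correct and follows essentially the same route as the paper's: test with $\vecV$, take real parts, absorb the indefinite cross term $\left(\epsilon^{-1} i\hat{\vecalpha}\times\vecv_2,\mcurl_{\svecbeta}\vecv_1\right)$ by Young's inequality into a reserved fraction of the curl term and into $M\|\vecv_2\|^2$, and use the Fourier characterization of the divergence-free kernel together with the spectral-gap hypothesis $\inf|\Re{\epsilon^{-1}}|\,|\vecgamma^{\vecI}|^2-\omega^2>0$ to control $-\omega^2\|\vecv_1\|^2$. The only differences are cosmetic: you expand $\vecv_1$ directly in Fourier modes (using $\vecgamma^{\vecI}\cdot\vecC_{\vecI}=0$) where the paper passes through the potential $\vecw_{\vecv_1}$ of Lemma \ref{MengMonk2016DecompositionVectorPeriodic}, your parameter $\theta$ plays the role of the paper's pair $(\delta,\tau)$, and you are in fact slightly more careful than the paper in justifying the uniform gap $c_0>0$ via $|\vecgamma^{\vecI}|\to\infty$.
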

\begin{proof}
From Young's inequality, 
\begin{eqnarray*}
&&|\left( \epsilon^{-1} i \hat{\vecalpha} \times \vecv_2, \mcurl_{\svecbeta} \vecv_1 \right)  | \\
&\le& c_{\svecbeta} \sup \, |\epsilon^{-1}| \left( \frac{\tau}{2} \|\vecv_2\|^2_{\vecL^2 (\Omega)} + \frac{1}{2\tau} \| \mcurl_{\svecbeta} \vecv_1 \|^2_{\vecL^2(\Omega)} \right),
\end{eqnarray*}
where $c_{\svecbeta}$ is a constant depending on $\vecbeta$, $\tau>0$ is sufficiently large and is to be determined. Now
\begin{eqnarray} \label{coer ineq 1}
\Re{a_{1}(\vecV; \vecV)} &\ge & \left( \Re{\epsilon^{-1}} \mcurl_{\svecbeta} \vecv_1, \mcurl_{\svecbeta} \vecv_1 \right) \nonumber\\&&- c_{\svecbeta} \sup |\epsilon^{-1}| \left( \frac{\tau}{2} \|\vecv_2\|^2_{\vecL^2 (\Omega)} + \frac{1}{2\tau} \|\vecv_1\|^2_{\vecH_p (\mcurl ; \Omega)} \right) \nonumber \\
&&- \omega^2 \left( \vecv_1, \vecv_1\right) + M \left( \vecv_2, \vecv_2 \right) .
\end{eqnarray}
From Lemma \ref{MengMonk2016DecompositionVectorPeriodic} we can have the following decomposition, 
\begin{eqnarray*}
\vecv_1 = \mcurl_{\svecbeta} \vecw_{\vecv_1} + \nabla_{\svecbeta} \phi_{\vecv_1},
\end{eqnarray*}
where
\begin{eqnarray*}
\vecw_{\vecv_1}  = \sum_{\vecI \in J} e^{i \vecI\cdot \vecx} \vecC_{\vecI}.
\end{eqnarray*}
Since $\vecV \in \vecK_{\svecbeta}$, then $\nabla_{\svecbeta} \cdot  \nabla_{\svecbeta} \phi_{\vecv_1} =0$ and consequently $\nabla_{\svecbeta} \phi_{\vecv_1}=0$ and $\vecv_1 = \mcurl_{\svecbeta} \vecw_{\vecv_1}$. Now one can write out explicitly 
\begin{eqnarray}\label{coer ineq 2}
\|\vecv_1\|^2_{\vecH_p (\mcurl ; \Omega)} = \sum_{\vecI \in J} |N_{\vecI} (N_{\vecI}\vecC_{\vecI})|^2 + \sum_{\vecI \in J} |N_{\vecI}\vecC_{\vecI}|^2.
\end{eqnarray}
From equation \eqref{MLMMaxwellDecompositionEqn} one has 
\begin{eqnarray} \label{coer ineq 3}
\lefteqn{\|\mcurl_{\svecbeta} \vecv_1\|^2 = \sum_{\vecI \in J} |N_{\vecI} (N_{\vecI}\vecC_{\vecI})|^2}\nonumber\\&=& (1-\delta) \sum_{\vecI \in J} |N_{\vecI} (N_{\vecI}\vecC_{\vecI})|^2 + \delta \sum_{\vecI \in J} |\vecgamma^{\vecI}|^2|(N_{\vecI}\vecC_{\vecI})|^2,
\end{eqnarray}
where $\delta$ is a constant to be determined. {Substituting} \eqref{coer ineq 2}--\eqref{coer ineq 3} into \eqref{coer ineq 1} one {obtains}
\begin{eqnarray*}
\Re{a_{1}(\vecV; \vecV)} &\ge& \left( (1-\delta ) \inf |\Re{\epsilon^{-1}}| - \frac{c_{\svecbeta}}{2\tau}  \sup  |\epsilon^{-1}| \right) \sum_{\vecI \in J} |N_{\vecI} (N_{\vecI}\vecC_{\vecI})|^2  \\
&&~~~ + \sum_{\vecI \in J} |N_{\vecI}\vecC_{\vecI}|^2 \left( \delta \,\inf \, |\Re{\epsilon^{-1}}| |\vecgamma_{\vecI}|^2 - \omega^2- \frac{c_{\svecbeta}}{2\tau}  \sup  |\epsilon^{-1}|  \right) \\
&& \, + (M- \frac{c_{\svecbeta} \tau}{2} \sup  |\epsilon^{-1}|)  \|\vecv_2\|^2_{\vecL^2 (\Omega)}.
\end{eqnarray*}
From the {assumptions of the lemma}, $\vecbeta$ is chosen such that $ \inf \, |\Re{\epsilon^{-1}}| |\vecgamma_{\vecI}|^2 - \omega^2 > 0$ for any $\vecI \in J$. This is possible under some assumptions made on $\epsilon$, see the following Remark \ref{assumption epsilon} for more details. Let $\delta \in (0,1)$ be sufficiently close to $1$ such that {$\delta \,\inf |\Re{\epsilon^{-1}}| |  \gamma_I|^2 - \omega^2 > 0$}. Let $\tau>0$ be sufficiently large such that $ (1-\delta ) \inf |\Re{\epsilon^{-1}}| - \frac{c_{\svecbeta}}{2\tau}  \sup  |\epsilon^{-1}| > 0$ and $ \delta \,\inf \, |\Re{\epsilon^{-1}}| |\vecgamma_{\vecI}|^2 - \omega^2- \frac{c_{\svecbeta}}{2\tau}  \sup  |\epsilon^{-1}| > 0$. Finally let $M>0$ be sufficiently large such that $M- \frac{c_{\svecbeta} \tau}{2} \sup  |\epsilon^{-1}|>0$. This shows that there exists a constant $C$ such that
\begin{eqnarray*}
\Re{ a_{1}(\vecV; \vecV) } &\ge& C  \big( \sum_{\vecI \in J} |N_{\vecI} (N_{\vecI}\vecC_{\vecI})|^2+ \sum_{\vecI \in J} |N_{\vecI}\vecC_{\vecI}|^2 +  \|\vecv_2\|^2_{\vecL^2 (\Omega)}\big) \\
&\ge& C \|\vecV\|^2_{\vecX(\Omega)}.
\end{eqnarray*}
This proves the lemma.
\end{proof} 

\begin{remark} \label{assumption epsilon}
In Lemma \ref{MengMonk2016MaxwellMixedQuadraticCovercivity}, $\vecbeta$ is chosen such that $ \inf\, |\Re{\epsilon^{-1}}| |\vecgamma^{\vecI}|^2 - \omega^2 > 0$ for any $\vecI \in J$. Here we give a sufficient condition on $\epsilon$ such that the existence of $\vecbeta$ is guaranteed. Recall that
\begin{eqnarray*}
\vecgamma^{\vecI} = \vecI + \vecbeta,\mbox{ where }\vecbeta=\vecalpha_0+\tau\hat\vecalpha.
\end{eqnarray*}
Here $\vecalpha_0\in [-\pi,\pi]^3$ and $\hat{\vecalpha}=(\hat{\alpha}_1,\hat{\alpha}_2,\hat{\alpha}_3)$ is a unit vector. Let $\vecI=2\pi(i_1,i_2,i_3)$ and $\tilde{\vecI}=\vecI+\vecalpha_0=2\pi(\tilde{i}_1,\tilde{i}_2,\tilde{i}_3)$, we have
\begin{eqnarray*}
|\vecgamma^{\vecI}|^2 &=& |(2\pi \tilde{i}_1 + \tau \hat{\alpha}_1,2\pi \tilde{i}_2 + \tau \hat{\alpha}_2,2\pi \tilde{i}_3 + \tau \hat{\alpha}_3)| \\
&=& 4\pi^2 (\tilde{i}_1^2+\tilde{i}_2^2+\tilde{i}_3^2) + \tau^2 + 4\pi \tau (\tilde{i}_1 \hat{\alpha}_1 + \tilde{i}_2 \hat{\alpha}_2+\tilde{i}_3 \hat{\alpha}_3) \\
&\ge &  4\pi^2 (\tilde{i}_1^2+\tilde{i}_2^2+\tilde{i}_3^2) + \tau^2  -  4\pi |\tau|\sqrt{\tilde{i}_1^2+\tilde{i}_2^2+\tilde{i}_3^2} \\
&= & (2\pi \sqrt{\tilde{i}_1^2+\tilde{i}_2^2+\tilde{i}_3^2} -|\tau| )^2.
\end{eqnarray*}
We discuss the following three cases.
\begin{enumerate}
\item[(a)]
The first case is $\vecalpha_0=0$.  In this case for all $\vecI=2\pi(i_1,i_2,i_3)$
\begin{eqnarray*}
|\vecgamma^{\vecI}|^2  
&\ge & (2\pi \sqrt{i_1^2+i_2^2+i_3^2} -|\tau| )^2 \\
& \ge & \min \{(2\pi-|\tau|)^2,\tau^2\}.
\end{eqnarray*}
Then to guarantee $ \inf\, |\Re{\epsilon^{-1}}| |\vecgamma^{\vecI}|^2 - \omega^2 > 0$ for any $\vecI \in J$, it is sufficient to have
\begin{eqnarray*}
\omega^2 \le \min \{(2 \pi -|\tau|)^2, \tau^2\} \inf |\Re{\epsilon^{-1}}|.
\end{eqnarray*}
For instance one can choose $\tau=\pi$, then the real part of $\frac{1}{\epsilon}$ can not be too small in order to compute $\omega$ in a certain range. A similar choice for the scalar case has been discussed in \cite{engstrom2014spectral}. 
\item[(b)]
Consider the case  $\vecalpha_0=(\pi,0,0)$, one can check that 
\begin{eqnarray*}
\tilde{\vecI} \in J+ 2\pi\left(\frac{1}{2},0,0\right)
\end{eqnarray*}
where $J=\{ 2\pi(i_1,i_2,i_3): \mbox{for integers} \,\,  i_1, i_2, i_3 \}$, this yields that
\begin{eqnarray*}
2\pi \sqrt{\tilde{i}_1^2+\tilde{i}_2^2+\tilde{i}_3^2} \ge \pi,
\end{eqnarray*}
where equality holds when $\tilde{i}_1=\frac{1}{2}$, and $\tilde{i}_2=\tilde{i}_3=0$. In this case let us pick $\tau=0$, i.e. $\vecbeta=(\pi,0,0)$, this shows that  the coercivity guaranteed by  Lemma \ref{MengMonk2016MaxwellMixedQuadraticCovercivity} holds  when
\begin{eqnarray*}
\omega^2 \in (0, \pi^2 \inf_{\vecx\in\Omega}  |\Re{\epsilon^{-1}(\vecx)}|).
\end{eqnarray*}
\item[(c)]
Consider the case $\vecalpha_0=(\pi,\pi,0)$, one can check that 
\begin{eqnarray*}
\tilde{\vecI} \in J+ 2\pi\left(\frac{1}{2},\frac{1}{2},0\right)
\end{eqnarray*}
where $J=\{ 2\pi(i_1,i_2,i_3): \mbox{for integers} \,\,  i_1, i_2, i_3 \}$, this yields that
\begin{eqnarray*}
2\pi \sqrt{\tilde{i}_1^2+\tilde{i}_2^2+\tilde{i}_3^2} \ge \sqrt{2}\pi,
\end{eqnarray*}
where equality holds when $\tilde{i}_1=\tilde{i}_2=\frac{1}{2}$, and $\tilde{i}_3=0$. In this case we can pick $\tau=0$, i.e. $\vecbeta=(\pi,\pi,0)$, this gives the coercivity guaranteed by Lemma \ref{MengMonk2016MaxwellMixedQuadraticCovercivity} when
\begin{eqnarray*}
\omega^2 \in (0, 2\pi^2 \inf_{\vecx\in\Omega} |\Re{\epsilon^{-1}}(\vecx)|).
\end{eqnarray*}
\end{enumerate}
\end{remark}
{Next we verify that $b_1(\cdot,\cdot)$ satisfies an inf-sup condition:}
\begin{lemma}  \label{MengMonk2016MaxwellMixedQuadraticInfSup}
$b_{1}(p; \vecV)$ satisfies the inf-sup condition, i.e. for any $p \in H^1_{p} (\Omega)$ there exists $\vecV\in \vecX(\Omega)$  such that
\begin{eqnarray*}
b_{1}(p; \vecV) \ge C  \|p\|_1\quad \mbox{and} \quad  \|\vecV\|_{\vecX(\Omega)} \le C  \|p\|_1,
\end{eqnarray*}
where $C$ is a constant  {independent of $p$.}
\end{lemma}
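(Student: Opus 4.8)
The plan is to verify the inf-sup condition by exhibiting an explicit test function rather than using an abstract argument. Given $p \in H^1_{p}(\Omega)$, I would take
\[
\vecV = (\nabla_{\svecbeta} p,\, 0) \in \vecX(\Omega),
\]
so that the second component vanishes. With this choice the form collapses to $b_{1}(p;\vecV) = (\nabla_{\svecbeta} p, \nabla_{\svecbeta} p) = \|\nabla_{\svecbeta} p\|^2$, and the whole lemma reduces to comparing $\|\nabla_{\svecbeta} p\|$ with $\|p\|_1$ on the one hand and with $\|\vecV\|_{\vecX(\Omega)}$ on the other.

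For the lower bound I would invoke the third estimate of Lemma \ref{MengMonk2016DecompositionVectorPeriodic} with $s=0$, namely $\|\phi\|_1 \le C\|\nabla_{\svecbeta}\phi\|$ applied to $\phi = p$, which gives $\|\nabla_{\svecbeta} p\| \ge C^{-1}\|p\|_1$. Equivalently, this can be read straight off the Fourier representation: writing $p = \sum_{\vecI\in J} e^{i\vecI\cdot\vecx} p_{\vecI}$ yields $\nabla_{\svecbeta} p = \sum_{\vecI\in J} e^{i\vecI\cdot\vecx}\, i\vecgamma^{\vecI} p_{\vecI}$, so $\|\nabla_{\svecbeta} p\|^2 = \sum_{\vecI\in J} |\vecgamma^{\vecI}|^2 |p_{\vecI}|^2$, which is precisely the equivalent $\vecH^1_p$-norm of $p$ introduced in Section \ref{decom}. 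Hence $b_{1}(p;\vecV) = \|\nabla_{\svecbeta} p\|^2 \gtrsim \|p\|_1\,\|\nabla_{\svecbeta} p\|$.

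For the upper bound on $\|\vecV\|_{\vecX(\Omega)}$ the only point needing attention is the curl of $\nabla_{\svecbeta} p$. Using the identity $N_{\vecI}\vecgamma^{\vecI}=0$ recorded in Section \ref{decom} (equivalently, a direct computation with constant $\vecbeta$ showing $\mcurl_{\svecbeta}\nabla_{\svecbeta} p = 0$), one sees that $\mcurl(\nabla_{\svecbeta} p) = -i\vecbeta\times\nabla_{\svecbeta} p$, so $\|\mcurl(\nabla_{\svecbeta} p)\| \le |\vecbeta|\,\|\nabla_{\svecbeta} p\|$. Therefore $\|\vecV\|_{\vecX(\Omega)}^2 = \|\nabla_{\svecbeta} p\|^2 + \|\mcurl(\nabla_{\svecbeta} p)\|^2 \le (1+|\vecbeta|^2)\|\nabla_{\svecbeta} p\|^2 \le C\|p\|_1^2$, where the last step uses the trivial bound $\|\nabla_{\svecbeta} p\| \le C\|p\|_1$. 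Combining the two estimates, the ratio $b_{1}(p;\vecV)/\|\vecV\|_{\vecX(\Omega)} \gtrsim \|p\|_1$, which is the desired inf-sup condition; the precise pair of inequalities in the statement then follows after the obvious normalization of $\vecV$.

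The step I expect to be the real obstacle is the lower bound, and this is exactly where the standing hypothesis $\vecbeta\neq 0$ is used: if $\vecbeta$ were zero, the constant functions would lie in the kernel of $\nabla_{\svecbeta}=\nabla$, and the Poincaré-type inequality $\|p\|_1 \le C\|\nabla_{\svecbeta} p\|$ would fail (the $\vecI=0$ mode contributes nothing to $\|\nabla_{\svecbeta} p\|$ yet contributes to $\|p\|_1$). The nonvanishing of $\vecgamma^{\vecI}=\vecbeta+\vecI$ for every $\vecI\in J$, guaranteed by $\vecbeta\in[-\pi,\pi]^3\setminus\{0\}$, is precisely what makes the regularity estimate of Lemma \ref{MengMonk2016DecompositionVectorPeriodic} applicable here; the remaining bounds are routine first-order estimates.
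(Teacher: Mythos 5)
Your proposal is correct and follows essentially the same route as the paper: the same explicit test function $\vecV = (\nabla_{\svecbeta}\, p,\, 0)$, with the lower bound obtained from the estimate $\|\phi\|_{1} \le C\|\nabla_{\svecbeta}\,\phi\|$ of Lemma \ref{MengMonk2016DecompositionVectorPeriodic}. Your write-up is in fact somewhat more detailed than the paper's (the Fourier verification of the norm equivalence, the bound on $\mcurl(\nabla_{\svecbeta}\, p)$ via $\mcurl_{\svecbeta}\nabla_{\svecbeta}\, p = 0$, and the remark on where $\vecbeta \neq 0$ is used), but the underlying argument is identical.
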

\begin{proof}
For any $p \in H^1_{p} (\Omega)$, let $\vecV = (\nabla_{\svecbeta} p, 0)$. Then from Lemma \ref{MengMonk2016DecompositionVectorPeriodic}
\begin{eqnarray*}
b_{1}(p; \vecV) \ge C \|\nabla_{\svecbeta} p\| \ge C \|p\|_1 \quad \mbox{and} \quad  \|\vecV\|_{\vecX(\Omega)} \le C\|p\|_1,
\end{eqnarray*}
where $C$ is a constant.
This proves the lemma.
\end{proof} 
From Lemma \ref{MengMonk2016MaxwellMixedQuadraticCovercivity} and Lemma \ref{MengMonk2016MaxwellMixedQuadraticInfSup}, one can first solve $(\vecU,p)$ with unknown $s$, then apply $(t,p)=0$ to get $s$. Therefore the source problem \eqref{MengMonk2016MaxwellMixedQuadratic21}--\eqref{MengMonk2016MaxwellMixedQuadratic22} has a unique solution.
\subsection{Regularity properties of the solution operator}
From Lemma \ref{MengMonk2016MaxwellMixedQuadraticCovercivity} and Lemma \ref{MengMonk2016MaxwellMixedQuadraticInfSup}, one can introduce the solution operator $T$ that maps $(\vecF, g) \in \vecX(\Omega) \times L^2(\Omega)$ to {the solution $(\vecU, p,s) \in  \vecX(\Omega)\times H_p^1(\Omega)\times H(\Omega)$ of the source problem \eqref{MengMonk2016MaxwellMixedQuadratic21}--\eqref{MengMonk2016MaxwellMixedQuadratic22}}. {In particular, if $(\vecF,g)=(\vecf_1,\vecf_2,g)$ and $(\vecU,p,s)=(\vecu_1,\vecu_2,p,s)$ solves \eqref{MengMonk2016MaxwellMixedQuadratic21}--\eqref{MengMonk2016MaxwellMixedQuadratic22} then  
\[
T (\vecf_1,\vecf_2, g):=(\vecu_1,\vecu_2,p,s).\]} Now, { {use Lemma~\ref{MengMonk2016DecompositionVectorPeriodic}}} we decompose $\vecu_1$ as
\begin{eqnarray} \label{Decomu1}
\vecu_1 = \mcurl_{\svecbeta} \vecw_{\vecu_1} + \nabla_{\svecbeta} \phi_{\vecu_1}.
\end{eqnarray}
From Lemma \ref{MengMonk2016DecompositionVectorPeriodic} we also have the following decompositions
\begin{eqnarray}
i \hat{\vecalpha} \times \big( \frac{1}{\epsilon} \mcurl_{\svecbeta} \vecf_1 \big)&=& \mcurl_{\svecbeta} \vecw_{\vecf_1} + \nabla_{\svecbeta} \phi_{\vecf_1}, \label{Decomf1}\\
i \hat{\vecalpha} \times \left( \epsilon^{-1} i \hat{\vecalpha} \times \vecf_2 \right) &=& \mcurl_{\svecbeta} \vecw_{\vecf_2} + \nabla_{\svecbeta} \phi_{\vecf_2}, \label{Decomuf2}\\
i \hat{\vecalpha} g  &=& \mcurl_{\svecbeta}\vecw_{g} + \nabla_{\svecbeta} \phi_{g}, \label{Decomg}
\end{eqnarray}
with $\vecw_{\vecf_1} \in \vecH^1_p(\Omega)$, $\vecw_{\vecf_2} \in \vecH^1_p(\Omega)$ and $\vecw_{g}\in \vecH^1_p(\Omega)$. With {this notation}, we have the following lemma.
\begin{lemma}\label{PrioriEstimate}
Let $(\vecU,p) = T (\vecF, g)$ and $s$ be defined as above. Then the following a priori estimates hold
\begin{eqnarray}
\|(\mcurl_{\svecbeta} \vecw_{\vecu_1}, \vecu_2)\|_{\vecX(\Omega)}   &\le&  c\left( \|\vecw_{\vecf_2}\|_{\vecL^2(\Omega)}+\|{\vecf_1}\|_{\vecL^2(\Omega)} + \|\vecw_{\vecf_1}\|_{\vecL^2(\Omega)} \right.\nonumber\\&&\qquad\left.+ \|\vecw_{g}\|_{\vecL^2(\Omega)} \right), \label{MLMMaxwellMixedQuadraticCompactEstimatecurlu1}\\
\|\nabla_{\svecbeta} \phi_{\vecu_1}\|_{\vecH^1_p(\Omega)} &\le& c \big(\|{\vecf_1}\|_{\vecL^2(\Omega)} +\|s\| \big), \label{MLMMaxwellMixedQuadraticCompactEstimatedivu1}\\
\|p\|_{H^1_p(\Omega)} &\le& c \left( \|{\vecf_1}\|_{\vecL^2(\Omega)} +\|\mcurl \vecf_1\|_{\vecL^2(\Omega)} + \|\vecf_2\|_{\vecL^2(\Omega)} \right.\nonumber\\&&\qquad\left.+ \|g\|_{L^2(\Omega)} \right),\label{MLMMaxwellMixedQuadraticCompactEstimatep} \\
|s| &\le& c \left( \|{\vecf_1}\|_{\vecL^2(\Omega)} +\|\mcurl \vecf_1\|_{\vecL^2(\Omega)} + \|\vecf_2\|_{\vecL^2(\Omega)} \right.\nonumber\\&&\qquad\left.+ \|g\|_{L^2(\Omega)} \right),\label{MLMMaxwellMixedQuadraticCompactEstimates}
\end{eqnarray}
where c is a generic constant.
\end{lemma}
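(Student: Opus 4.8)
The plan is to establish the four bounds by testing the source problem \eqref{MengMonk2016MaxwellMixedQuadratic21}--\eqref{MengMonk2016MaxwellMixedQuadratic22} against fields adapted to the splitting of Lemma~\ref{MengMonk2016DecompositionVectorPeriodic}, using the coercivity of Lemma~\ref{MengMonk2016MaxwellMixedQuadraticCovercivity} and the inf-sup bound of Lemma~\ref{MengMonk2016MaxwellMixedQuadraticInfSup}. Two facts on the boundaryless cell are used repeatedly: the decomposition \eqref{Decomu1} is $\vecL^2(\Omega)$-orthogonal (in Fourier $N_{\vecI}\vecC_{\vecI}$ is perpendicular to $\vecgamma^{\vecI}$), and the operators $\mcurl_{\svecbeta}$ and $i\hat{\vecalpha}\times$ are self-adjoint for $(\cdot,\cdot)$, i.e. $(\mcurl_{\svecbeta}\veca,\vecb)=(\veca,\mcurl_{\svecbeta}\vecb)$ and $(i\hat{\vecalpha}\times\veca,\vecb)=(\veca,i\hat{\vecalpha}\times\vecb)$. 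To prove \eqref{MLMMaxwellMixedQuadraticCompactEstimatecurlu1}, observe that $\vecV_0:=(\mcurl_{\svecbeta}\vecw_{\vecu_1},\vecu_2)$ lies in $\vecK_{\svecbeta}$ since $\mdiv_{\svecbeta}\mcurl_{\svecbeta}=0$, and I would test \eqref{MengMonk2016MaxwellMixedQuadratic21} with $\vecV=\vecV_0$, $t=0$. Then $b_1(p;\vecV_0)=0$, while $\mcurl_{\svecbeta}\vecu_1=\mcurl_{\svecbeta}(\mcurl_{\svecbeta}\vecw_{\vecu_1})$ and $(\nabla_{\svecbeta}\phi_{\vecu_1},\mcurl_{\svecbeta}\vecw_{\vecu_1})=0$ give $a_1(\vecU;\vecV_0)=a_1(\vecV_0;\vecV_0)$, so Lemma~\ref{MengMonk2016MaxwellMixedQuadraticCovercivity} yields $C\|\vecV_0\|_{\vecX(\Omega)}^2\le\Re a_1(\vecV_0;\vecV_0)=\Re\big(a_2(\vecF;\vecV_0)+b_2(g;\vecV_0)\big)$. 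Each of the pairings $(\epsilon^{-1}i\hat{\vecalpha}\times\vecf_2,i\hat{\vecalpha}\times\vecv_1^{(0)})$, $(\epsilon^{-1}\mcurl_{\svecbeta}\vecf_1,i\hat{\vecalpha}\times\vecv_1^{(0)})$ and $(i\hat{\vecalpha}g,\vecv_1^{(0)})$ (with $\vecv_1^{(0)}=\mcurl_{\svecbeta}\vecw_{\vecu_1}$) can be written as $(\vecG,\vecv_1^{(0)})$, where $\vecG$ is the left-hand side of \eqref{Decomuf2}, \eqref{Decomf1} or \eqref{Decomg}; inserting the decomposition, discarding the $\nabla_{\svecbeta}$-part by orthogonality against the divergence-free $\vecv_1^{(0)}$, and integrating one $\mcurl_{\svecbeta}$ by parts gives, e.g., $(\mcurl_{\svecbeta}\vecw_{\vecf_2},\vecv_1^{(0)})=(\vecw_{\vecf_2},\mcurl_{\svecbeta}\vecv_1^{(0)})\le c\,\|\vecw_{\vecf_2}\|\,\|\vecV_0\|_{\vecX(\Omega)}$. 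The remaining term $-M(\vecf_1,\vecu_2)$ is bounded directly by $\|\vecf_1\|\,\|\vecV_0\|_{\vecX(\Omega)}$, and dividing by $\|\vecV_0\|_{\vecX(\Omega)}$ proves \eqref{MLMMaxwellMixedQuadraticCompactEstimatecurlu1}.

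The constant $s$ is the delicate quantity, since \eqref{MLMMaxwellMixedQuadraticCompactEstimatedivu1} bounds $\nabla_{\svecbeta}\phi_{\vecu_1}$ only in terms of $s$; to avoid circularity, the bound on $s$ must not reintroduce $\phi_{\vecu_1}$. I would mimic the argument of Lemma~\ref{pde to improved mixed 1}. Taking $q=1$ in the conjugate of \eqref{MengMonk2016MaxwellMixedQuadratic22} gives $s+|\vecbeta|^2\phi_0=-i\hat{\vecalpha}\cdot\int_\Omega\vecf_1\,dx$, where $\phi_0$ is the zeroth Fourier coefficient of $\phi_{\vecu_1}$ (the $\mcurl_{\svecbeta}\vecw_{\vecu_1}$ part drops out by orthogonality). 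Next I would test \eqref{MengMonk2016MaxwellMixedQuadratic21} with the constant field $\vecV=(i\vecbeta,0)=(\nabla_{\svecbeta}1,0)$ and $t=0$: the curl terms vanish, and because $\int_\Omega p\,dx=0$ forces $p_0=0$ the term $b_1(p;\vecV)$ vanishes as well, leaving $-\omega^2|\vecbeta|^2\phi_0$ equal to $a_2(\vecF;\vecV)+b_2(g;\vecV)$, whose entries pair the data against the constant vectors $i\hat{\vecalpha}\times i\vecbeta$ and $i\vecbeta$. Eliminating $|\vecbeta|^2\phi_0$ between the two identities (here $\omega\neq0$ is used) expresses $s$ through $\int_\Omega\vecf_1\,dx$ and these data terms, all controlled by $\|\vecf_1\|$, $\|\vecf_2\|$, $\|\mcurl\vecf_1\|$ and $\|g\|$; this is \eqref{MLMMaxwellMixedQuadraticCompactEstimates}. (Equivalently one may apply $\mdiv_{\svecbeta}$ to the strong form of \eqref{MengMonk2016MaxwellMixedQuadratic21}, substitute $\mdiv_{\svecbeta}\vecu_1=s+i\hat{\vecalpha}\cdot\vecf_1$ from the constraint, and integrate over $\Omega$.)

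For \eqref{MLMMaxwellMixedQuadraticCompactEstimatedivu1}, orthogonality reduces the conjugate of \eqref{MengMonk2016MaxwellMixedQuadratic22} to $(\nabla_{\svecbeta}\phi_{\vecu_1},\nabla_{\svecbeta}q)=(-i\hat{\vecalpha}\cdot\vecf_1-s,q)$ for all $q\in H^1_p(\Omega)$. Since $\vecbeta\neq0$, in the Fourier representation $-\mdiv_{\svecbeta}\nabla_{\svecbeta}$ is multiplication by $|\vecgamma^{\vecI}|^2\ge\min_{\vecI\in J}|\vecgamma^{\vecI}|^2>0$, so this Poisson-type problem gains two derivatives: $\|\phi_{\vecu_1}\|_2\le c\,\|-i\hat{\vecalpha}\cdot\vecf_1-s\|\le c(\|\vecf_1\|+\|s\|)$, and \eqref{MLMMaxwellMixedQuadraticCompactEstimatedivu1} follows from $\|\nabla_{\svecbeta}\phi_{\vecu_1}\|_{\vecH^1_p(\Omega)}\le c\|\phi_{\vecu_1}\|_2$. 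Finally, for \eqref{MLMMaxwellMixedQuadraticCompactEstimatep} I would use Lemma~\ref{MengMonk2016MaxwellMixedQuadraticInfSup} with $\vecV_p=(\nabla_{\svecbeta}p,0)$: testing \eqref{MengMonk2016MaxwellMixedQuadratic21} with $\vecV_p$, $t=0$, the curl contributions to $a_1(\vecU;\vecV_p)$ vanish and only $-\omega^2(\nabla_{\svecbeta}\phi_{\vecu_1},\nabla_{\svecbeta}p)$ survives, so $\|\nabla_{\svecbeta}p\|^2=b_1(p;\vecV_p)\le c\big(\|\nabla_{\svecbeta}\phi_{\vecu_1}\|+\|\vecf_1\|+\|\mcurl\vecf_1\|+\|\vecf_2\|+\|g\|\big)\|p\|_1$. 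Dividing by $\|p\|_1$ and then substituting \eqref{MLMMaxwellMixedQuadraticCompactEstimatedivu1} and \eqref{MLMMaxwellMixedQuadraticCompactEstimates} to remove $\|\nabla_{\svecbeta}\phi_{\vecu_1}\|$ and $\|s\|$ yields \eqref{MLMMaxwellMixedQuadraticCompactEstimatep}. The one genuine obstacle is precisely this decoupling of $s$ from $\phi_{\vecu_1}$, which the normalization $\int_\Omega p\,dx=0$ together with $\omega\neq0$ makes possible.
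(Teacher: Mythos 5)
Your proposal is correct and follows essentially the same route as the paper's own proof: coercivity on the kernel $\vecK_{\svecbeta}$ plus the decompositions \eqref{Decomf1}--\eqref{Decomg} and integration by parts for \eqref{MLMMaxwellMixedQuadraticCompactEstimatecurlu1}, gradient test functions built from $p$ for \eqref{MLMMaxwellMixedQuadraticCompactEstimatep}, elliptic regularity of $\mdiv_{\svecbeta}\nabla_{\svecbeta}$ for \eqref{MLMMaxwellMixedQuadraticCompactEstimatedivu1}, and constant test functions together with $\int_\Omega p\,dx=0$ and $\omega\neq 0$ for \eqref{MLMMaxwellMixedQuadraticCompactEstimates}. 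The only (cosmetic) difference is the order and packaging of the $s$-estimate: the paper first bounds $\|p\|_1$ by the data plus $|s|$ via its Poisson-type identity \eqref{MengMonk2016MaxwellMixedQuadratic32} and then takes $\phi_{\vecv_1}=s$ there, whereas you isolate $s$ first by eliminating the zeroth Fourier coefficient $\phi_0$ between the $q=1$ constraint and the test field $\vecV=(\nabla_{\svecbeta}1,0)$ --- the same computation organized differently.
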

\begin{proof}
From equation (\ref{MengMonk2016MaxwellMixedQuadratic22}), one can derive that
\begin{eqnarray*} 
-(\mdiv_{\svecbeta} \vecu_1,q) + (s,q) = -(i \hat{\vecalpha}\cdot \vecf_1,q),
\end{eqnarray*}
{n}ote from the decomposition (\ref{Decomu1}), one can obtain
\begin{eqnarray} \label{MLMMaxwellMixedQuadraticdivu1}
-(\mdiv_{\svecbeta} \nabla_{\svecbeta} \phi_{\vecu_1}  ,q) + (s,q) = -(i \hat{\vecalpha}\cdot \vecf_1,q), \mbox{ for any } q\in H^1_p(\Omega).
\end{eqnarray}
Let $\vecv_1=0$ and $t=0$ in equation (\ref{MengMonk2016MaxwellMixedQuadratic21}), then one can directly obtain 
 \begin{eqnarray} \label{MLM u2 f1}
 \vecu_2 = -\vecf_1.
\end{eqnarray}
Let $\vecv_1 = \nabla_{\svecbeta} \phi_{\vecv_1} \in K_{\svecbeta}^\perp$ and $t=0$ in equation (\ref{MengMonk2016MaxwellMixedQuadratic21}), then with the help of \eqref{MLMMaxwellMixedQuadraticdivu1}
\begin{eqnarray}
-(\mdiv_{\svecbeta}  \nabla_{\svecbeta} \, p, \phi_{\vecv_1} )&=&-\omega^2  (i \hat{\vecalpha} \cdot \vecf_1 +s, \phi_{\vecv_1} ) + \Big( i \hat{\vecalpha} \times \big(\frac{1}{\epsilon} \mcurl_{\svecbeta} \vecf_1\big), \nabla_{\svecbeta}\phi_{\vecv_1}   \Big)\nonumber\\&&+ ( i \hat{\vecalpha} g,\nabla_{\svecbeta}\phi_{\vecv_1}   )  + ~  \Big(  i \hat{\vecalpha} \times \big( \frac{1}{\epsilon}  i \hat{\vecalpha} \times \vecf_2  \big) , \nabla_{\svecbeta}\phi_{\vecv_1}   \Big), \label{MengMonk2016MaxwellMixedQuadratic32}
\end{eqnarray}
for any $\phi_{\vecv_1}\in H^1_p(\Omega)$.
Let  $ \phi_{\vecv_1}=p$ in equation (\ref{MengMonk2016MaxwellMixedQuadratic32}), one can obtain
\begin{eqnarray*}
\|p\|_1 \le c \left( \|\vecf_1\| +\|\mcurl \vecf_1\| + \|\vecf_2\| + \|g\| + |s| \right).
\end{eqnarray*}
Next we estimate $s$. Let $\vecV=0$ in (\ref{MengMonk2016MaxwellMixedQuadratic21}), then one can obtain
\begin{eqnarray*}
(t,p)=0 \mbox{ for any constant } t, \mbox{ i.e. } (1,p)=0.
\end{eqnarray*}
Now let $\phi_{\vecv_1}=s$ in \eqref{MengMonk2016MaxwellMixedQuadratic32}, one can obtain
\begin{eqnarray*}
|s| \le c \left( \|\vecf_1\| +\|\mcurl \vecf_1\| + \|\vecf_2\| + \|g\| \right).
\end{eqnarray*}
{Taking these estimates together yields} estimates (\ref{MLMMaxwellMixedQuadraticCompactEstimatep}) -- \eqref{MLMMaxwellMixedQuadraticCompactEstimates}. 

From equation \eqref{MLMMaxwellMixedQuadraticdivu1}, one can obtain 
\begin{eqnarray*}
\mdiv_{\svecbeta} \nabla_{\svecbeta} \phi_{\vecu_1} = i \hat{\vecalpha} \cdot \vecf_1+s.
\end{eqnarray*}
Since $ \mcurl_{\svecbeta}\nabla_{\svecbeta} \phi_{\vecu_1}=0$, then one can derive that estimate (\ref{MLMMaxwellMixedQuadraticCompactEstimatedivu1}) holds.

Now we derive \eqref{MLMMaxwellMixedQuadraticCompactEstimatecurlu1}. Note that $\tilde{\vecU}=(\mcurl_{\svecbeta} \vecw_{\vecu_1},\vecu_2) \in K_{\svecbeta}$ satisfies the variational form
\begin{eqnarray}
a_{1}(\tilde{\vecU}; \vecV)  &=& a_{2}(\vecF; \vecV) + b_{2}(g; \vecV) \quad \forall \quad \vecV \in  K_{\svecbeta}. \label{MengMonk2016MaxwellMixedQuadraticKtau} 
\end{eqnarray}
To estimate the right hand side of \eqref{MengMonk2016MaxwellMixedQuadraticKtau}, we first observe that
\begin{eqnarray}
&& |a_{2}(\vecF; \vecV) + b_{2}(g; \vecV)| \nonumber \\
& = & | \left( \epsilon^{-1} i \hat{\vecalpha} \times \vecf_2,  i \hat{\vecalpha} \times \vecv_1 \right) - M \left( \vecf_1, \vecv_2 \right)  +\left( \epsilon^{-1} \mcurl_{\svecbeta} \vecf_1, i \hat{\vecalpha} \times \vecv_1 \right) + \left(  i \hat{\vecalpha} g, \vecv_1  \right)| \nonumber \\
&=& |\left( i \hat{\vecalpha} \times \big( \epsilon^{-1} i \hat{\vecalpha} \times \vecf_2 \big),  \vecv_1 \right) - M \left( \vecf_1, \vecv_2 \right) + \left(i \hat{\vecalpha} \times \big( \epsilon^{-1} \mcurl_{\svecbeta} \vecf_1 \big), \vecv_1 \right)  \nonumber \\
&&~+ \left(  i \hat{\vecalpha} g, \vecv_1  \right)|. \label{MLMMaxwellMixedQuadraticataubtau}
\end{eqnarray}
Note that for any $\vecV=(\vecv_1,\vecv_2) \in K_{\svecbeta}$, we have that 
\begin{eqnarray*}
\vecv_1 = \mcurl_{\svecbeta} \vecw_{\vecv_1} \quad \mbox{and} \quad \nabla_{\svecbeta} \cdot \vecw_{\vecv_1} =0.
\end{eqnarray*}
From equations (\ref{Decomf1}) -- (\ref{Decomg}) and integration by parts
\begin{eqnarray}
\left(i \hat{\vecalpha} \times \big( \epsilon^{-1} \mcurl_{\svecbeta} \vecf_1 \big), \vecv_1 \right) &=& \left( \vecw_{\vecf_1} ,  \mcurl_{\svecbeta}  \vecv_1 \right), \label{MLMMaxwellMixedQuadraticf1v1}\\
\left( i \hat{\vecalpha} \times \left( \epsilon^{-1} i \hat{\vecalpha} \times \vecf_2 \right),  \vecv_1 \right) 
&=& \left( \vecw_{\vecf_2} ,  \mcurl_{\svecbeta}  \vecv_1 \right), \label{MLMMaxwellMixedQuadraticf2v1}\\
\left(  i \hat{\vecalpha} g, \vecv_1  \right) &=& \left( \vecw_g, \mcurl_{\svecbeta} \vecv_1  \right).\label{MLMMaxwellMixedQuadraticgv1}
\end{eqnarray}
Now from equations (\ref{MLMMaxwellMixedQuadraticataubtau})--(\ref{MLMMaxwellMixedQuadraticgv1})  we have that
 \begin{eqnarray}
&& \left|a_{2}(\vecF; \vecV) + b_{2}(g; \vecV) \right| \nonumber \\
& \le & c\Big(( \|\vecw_{\vecf_2}\|_{\vecL^2(\Omega)} \|\mcurl_{\svecbeta} \vecv_1\|_{\vecL^2(\Omega)}  + \|\vecf_1\|_{\vecL^2(\Omega)} \| \vecv_2\|_{\vecL^2(\Omega)} +  \|\vecw_{\vecf_1}\|_{\vecL^2(\Omega)} \|\mcurl_{\svecbeta} \vecv_1\|_{\vecL^2(\Omega)} \nonumber \\
&&~~+\|\vecw_{g}\|_{\vecL^2(\Omega)} \|\mcurl_{\svecbeta} \vecv_1\|_{\vecL^2(\Omega)} \Big) \nonumber \\
&\le & c\Big( \|\vecw_{\vecf_2}\|_{\vecL^2(\Omega)} + \|\vecf_1\|_{\vecL^2(\Omega)}+\|\vecw_{\vecf_1}\|_{\vecL^2(\Omega)} + \|\vecw_{g}\|_{\vecL^2(\Omega)}\Big) \|\vecV\|_{\vecX(\Omega)}.
 \label{MLMMaxwellMixedQuadraticataubtauestimate}
\end{eqnarray}
From Lemma \ref{MengMonk2016MaxwellMixedQuadraticCovercivity}, equation (\ref{MengMonk2016MaxwellMixedQuadraticKtau}) and (\ref{MLMMaxwellMixedQuadraticataubtauestimate}), we have that 
\begin{eqnarray*}
\|\tilde{\vecU}\|^2_{\vecX(\Omega)} &\le& c \Re{a_{1}(\tilde{\vecU}; \tilde{\vecU}) } \\ 
&\le & c \left|a_{2}(\vecF; \tilde{\vecU}) + b_{2}(g; \tilde{\vecU}) \right| \\
&\le & c\Big( \|\vecw_{\vecf_2}\|_{\vecL^2(\Omega)} + \|\vecf_1\|_{\vecL^2(\Omega)}+\|\vecw_{\vecf_1}\|_{\vecL^2(\Omega)} + \|\vecw_{g}\|_{\vecL^2(\Omega)}\Big)\|\tilde{\vecU}\|_{\vecX(\Omega)}.
\end{eqnarray*}
This yields that 
\begin{eqnarray*}
\|\tilde{\vecU}\|_{\vecX(\Omega)}   \le  c\Big( \|\vecw_{\vecf_2}\|_{\vecL^2(\Omega)} + \|\vecf_1\|_{\vecL^2(\Omega)}+\|\vecw_{\vecf_1}\|_{\vecL^2(\Omega)} + \|\vecw_{g}\|_{\vecL^2(\Omega)}\Big),
\end{eqnarray*}
i.e. estimate (\ref{MLMMaxwellMixedQuadraticCompactEstimatecurlu1}) holds. This proves the lemma.
\end{proof} 
{Now we prove that $T$ is compact:}
\begin{lemma} \label{MLMRegularity}
The solution operator
\begin{eqnarray*}
T:  \left( \vecH_p^1(\Omega) \times \vecL_p^2(\Omega) \right) \times L_p^2(\Omega) \to  \left( \vecH_p^1(\Omega) \times \vecL^2(\Omega) \right) \times L^2(\Omega){\times H(\Omega)}
\end{eqnarray*}
is compact.
\end{lemma}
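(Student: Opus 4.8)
The plan is to show that $T$ maps bounded sets to precompact sets by establishing that each component of the image lies in a space that embeds compactly into the corresponding target factor, and then extracting a single convergent subsequence. Concretely, let $(\vecf_1^n,\vecf_2^n,g^n)$ be a bounded sequence in $(\vecH_p^1(\Omega)\times\vecL_p^2(\Omega))\times L_p^2(\Omega)$ and write $(\vecu_1^n,\vecu_2^n,p^n,s^n)=T(\vecf_1^n,\vecf_2^n,g^n)$. First I would check that all the right-hand sides appearing in the a priori estimates of Lemma~\ref{PrioriEstimate} are controlled by this bound: in particular $\|\mcurl\vecf_1^n\|\le C\|\vecf_1^n\|_1$ is bounded because $\vecf_1^n\in\vecH_p^1(\Omega)$, and the auxiliary fields $\vecw_{\vecf_1}^n,\vecw_{\vecf_2}^n,\vecw_g^n$ coming from the decompositions \eqref{Decomf1}--\eqref{Decomg} are bounded in $\vecL^2(\Omega)$ by Lemma~\ref{MengMonk2016DecompositionVectorPeriodic}.

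The scalar and multiplier components are then immediate. Estimate \eqref{MLMMaxwellMixedQuadraticCompactEstimatep} bounds $p^n$ in $H^1_p(\Omega)$, so by the Rellich theorem on the torus a subsequence converges in $L^2(\Omega)$; estimate \eqref{MLMMaxwellMixedQuadraticCompactEstimates} bounds $|s^n|$, and since $H(\C)$ is one-dimensional a further subsequence converges. For $\vecu_2^n$ I use the identity $\vecu_2^n=-\vecf_1^n$ derived in the proof of Lemma~\ref{PrioriEstimate}: since $\vecf_1^n$ is bounded in $\vecH_p^1(\Omega)$, Rellich again yields a subsequence converging in $\vecL^2(\Omega)$. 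Thus the only component requiring genuine work is $\vecu_1^n$.

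For $\vecu_1^n$ I would use the decomposition \eqref{Decomu1}, $\vecu_1^n=\mcurl_{\svecbeta}\vecw_{\vecu_1}^n+\nabla_{\svecbeta}\phi_{\vecu_1}^n$, and treat the two parts separately. The gradient part is bounded in $\vecH^1_p(\Omega)$ by \eqref{MLMMaxwellMixedQuadraticCompactEstimatedivu1} (using the $|s^n|$ bound just obtained). The curl part is bounded in $\vecX(\Omega)$ by \eqref{MLMMaxwellMixedQuadraticCompactEstimatecurlu1}, which a priori only places $\mcurl_{\svecbeta}\vecw_{\vecu_1}^n$ in $\vecH_p(\mcurl;\Omega)$, an embedding into $\vecL^2(\Omega)$ that is \emph{not} compact. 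The key observation, and the main obstacle, is that $\mcurl_{\svecbeta}\vecw_{\vecu_1}^n$ is divergence free and therefore gains a full derivative: writing $\mcurl_{\svecbeta}\vecw_{\vecu_1}^n=\sum_{\vecI\in J}e^{i\vecI\cdot\vecx}N_{\vecI}\vecC_{\vecI}^n$, identity \eqref{MLMMaxwellDecompositionEqn} gives $|N_{\vecI}(N_{\vecI}\vecC_{\vecI}^n)|^2=|\vecgamma^{\vecI}|^2|N_{\vecI}\vecC_{\vecI}^n|^2$, so that the equivalent $\vecH^1_p$-norm $(\sum_{\vecI}|\vecgamma^{\vecI}|^2|N_{\vecI}\vecC_{\vecI}^n|^2)^{1/2}$ of $\mcurl_{\svecbeta}\vecw_{\vecu_1}^n$ equals $\|\mcurl_{\svecbeta}\mcurl_{\svecbeta}\vecw_{\vecu_1}^n\|$, which is exactly part of the $\vecX(\Omega)$-bound. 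Hence both parts of $\vecu_1^n$ are bounded in $\vecH^1_p(\Omega)$, and Rellich produces a subsequence converging in $\vecL^2(\Omega)$.

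Finally I would extract a single subsequence along which all four components converge, which shows that $T$ sends bounded sequences to precompact ones and is therefore compact. I want to stress that the rough coefficient $\epsilon\in L^\infty(\Omega)$ forbids any regularity gain through the curl-curl operator, so the relevant compact embedding is $\vecH^1_p\hookrightarrow\vecL^2$ rather than $\vecH^2_p\hookrightarrow\vecH^1_p$; the divergence-free structure captured by Lemma~\ref{MengMonk2016DecompositionVectorPeriodic} and identity \eqref{MLMMaxwellDecompositionEqn} is precisely what upgrades the curl part of $\vecu_1^n$ from $\vecH_p(\mcurl;\Omega)$ to $\vecH^1_p(\Omega)$ and makes the compactness argument close.
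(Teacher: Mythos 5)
Your toolkit is the same as the paper's---the a priori estimates of Lemma~\ref{PrioriEstimate}, the decomposition \eqref{Decomu1}, the identity \eqref{MLMMaxwellDecompositionEqn} to exploit the divergence-free structure, and the compact embedding $\vecH^1_p(\Omega)\hookrightarrow\vecL^2(\Omega)$---and your treatment of $\vecu_2$, $p$ and $s$ is fine, since those target factors are $\vecL^2(\Omega)$, $L^2(\Omega)$ and the finite-dimensional space $H(\C)$. But there is a genuine gap in the component that, as you say, requires the real work: you apply the compact embedding on the wrong side of the estimates, and consequently you prove compactness into a weaker space than the one claimed. The codomain in the statement has first factor $\vecH^1_p(\Omega)$, so compactness of $T$ requires that $\vecu_1^n$ admit a subsequence converging \emph{in the $\vecH^1_p(\Omega)$-norm}. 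Your argument shows only that $\vecu_1^n$ is bounded in $\vecH^1_p(\Omega)$ (via \eqref{MLMMaxwellMixedQuadraticCompactEstimatedivu1}, \eqref{MLMMaxwellMixedQuadraticCompactEstimatecurlu1} and the divergence-free upgrade) and then invokes Rellich to get a subsequence converging in $\vecL^2(\Omega)$. Boundedness in $\vecH^1_p$ does not yield a strongly $\vecH^1_p$-convergent subsequence---the identity map on $\vecH^1_p(\Omega)$ is not compact---so what you have proved is compactness of $T$ into $\left(\vecL^2(\Omega)\times\vecL^2(\Omega)\right)\times L^2(\Omega)\times H(\C)$, not the stated result.

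The paper closes this by applying compactness on the \emph{data} side rather than the solution side. Take the input sequence weakly convergent to zero (for your bounded sequence: extract a weak limit and subtract it, using linearity of $T$). Then $(\vecf_1)_j$ and the auxiliary fields $(\vecw_{\vecf_1})_j,(\vecw_{\vecf_2})_j,(\vecw_g)_j$ from \eqref{Decomf1}--\eqref{Decomg} are bounded in $\vecH^1_p(\Omega)$, hence converge strongly to zero in $\vecL^2(\Omega)$ along a subsequence by Rellich; these $\vecL^2$-norms are exactly the right-hand sides of \eqref{MLMMaxwellMixedQuadraticCompactEstimatecurlu1} and \eqref{MLMMaxwellMixedQuadraticCompactEstimatedivu1} (together with $|s_j|\to 0$, obtained from \eqref{MLMMaxwellMixedQuadraticCompactEstimates} and the one-dimensionality of $H(\C)$). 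The estimates then deliver strong convergence of $(\mcurl_{\svecbeta}(\vecw_{\vecu_1})_j,(\vecu_2)_j)$ to zero in $\vecX(\Omega)$---hence, by precisely the divergence-free identity you singled out, in $\vecH^1_p(\Omega)\times\vecL^2(\Omega)$---and of $(\nabla_{\svecbeta}\phi_{\vecu_1})_j$ to zero in $\vecH^1_p(\Omega)$, giving convergence of $(\vecu_1)_j$ in the full target norm. The point is that Lemma~\ref{PrioriEstimate} bounds the output in the \emph{target} norm by quantities that depend compactly on the data; that is what makes the $\vecH^1_p$-compactness claim close. All the ingredients for this repair are already in your proposal, but as written the argument falls short of the stated lemma.
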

\begin{proof}
Now suppose $\big( (\vecf_1)_j,(\vecf_2)_j,g_j\big)$ converges weakly  to zero in $  \vecH_p^1(\Omega) \times \vecL_p^2(\Omega) \times L_p^2(\Omega) $. Let $\big( (\vecu_1)_j,(\vecu_2)_j,p_j{,s_j}\big)=T \big( (\vecf_1)_j,(\vecf_2)_j,g_j \big)$ and 
\begin{eqnarray*} 
(\vecu_1)_j = \mcurl_{\svecbeta} (\vecw_{\vecu_1})_j + \nabla_{\svecbeta} (\phi_{\vecu_1})_j.
\end{eqnarray*}
Furthermore let $s_j$ be such that $\big( (\vecu_1)_j,(\vecu_2)_j,p_j,s_j\big)$ solves the source problem \eqref{MengMonk2016MaxwellMixedQuadratic21}--\eqref{MengMonk2016MaxwellMixedQuadratic22} with source $\big( (\vecf_1)_j,(\vecf_2)_j,g_j \big)$.

Analogous to equations (\ref{Decomf1}) -- (\ref{Decomg}), let
\begin{eqnarray*}
i \hat{\vecalpha} \times \big( \frac{1}{\epsilon} \mcurl_{\svecbeta} (\vecf_1)_j \big)&=& \mcurl_{\svecbeta} (\vecw_{\vecf_1})_j + \nabla_{\svecbeta} (\phi_{\vecf_1})_j, \quad \mdiv_{\svecbeta} (\vecw_{\vecf_1})_j=0,\\
i \hat{\vecalpha} \times \left( \epsilon^{-1} i \hat{\vecalpha} \times (\vecf_2)_j \right) &=& \mcurl_{\svecbeta} (\vecw_{\vecf_2})_j + \nabla_{\svecbeta} (\phi_{\vecf_2})_j, \quad \mdiv_{\svecbeta} (\vecw_{\vecf_2})_j=0, \\
i \hat{\vecalpha} g_j  &=& \mcurl_{\svecbeta} (\vecw_{g})_j + \nabla_{\svecbeta} (\phi_{g})_j,  \quad \mdiv_{\svecbeta} (\vecw_{g})_j=0,
\end{eqnarray*}
then it can be seen that $(\vecw_{\vecf_1})_j$, $(\vecw_{\vecf_2})_j$ and $(\vecw_{g})_j $ converges weakly  in $\vecH^1_p(\Omega)$. The compact embedding from $\vecH^1_p(\Omega)$ to $\vecL^2(\Omega)$ yields that there exists a sequence (still denoted as) $(\vecw_{\vecf_1})_j$, $(\vecw_{\vecf_2})_j$,  $(\vecw_{g})_j$ and $(\vecf_1)_j $ converging strongly  to zero in $\vecL^2(\Omega)$. From equation (\ref{MLMMaxwellMixedQuadraticCompactEstimatecurlu1}), $(\mcurl_{\svecbeta} (\vecw_{\vecu_1})_j, (\vecu_2)_j)$ converges strongly to zero in $\vecX(\Omega)$ and hence in $\vecH_p^1(\Omega) \times \vecL^2(\Omega)$.

From (\ref{MLMMaxwellMixedQuadraticCompactEstimates}), one can obtain that $s_j$ converges weakly  to zero in 
$L^2(\Omega)$. Since $s_j$ are constants, {then $s_j$ strongly converges to zero in $L^2(\Omega)$. In addition, $(\vecf_1)_j$ weakly converges to zero in $\vecH_p^1(\Omega)$, then there exist a sequence (still denoted as)   $(\vecf_1)_j$ strongly converging to zero in $\vecL^2(\Omega)$}. From (\ref{MLMMaxwellMixedQuadraticCompactEstimatedivu1}), $(\nabla_{\svecbeta} \phi_{\vecu_1})_j$ converges strongly to zero in $\vecH^1_p(\Omega)$. 

From equation  (\ref{MLMMaxwellMixedQuadraticCompactEstimatep}), $p_j$ converges weakly to zero in $H^1_p(\Omega)$. The compact embedding from $H^1_p(\Omega)$ to $L^2(\Omega)$ yields that there exist a sequence (still denoted as) {$\{p_j\}$} strongly convergent to zero in $L^2(\Omega)$. 

From equation \eqref{MLM u2 f1}, one can obtain  $(\vecu_2)_j=-(\vecf_1)_j$. Since $(\vecf_1)_j$ converges weakly  to zero in $\vecH_p^1(\Omega)$, then one can obtain that $(\vecf_1)_j$ strongly converges to zero in $\vecL^2(\Omega)$ and therefore $(\vecu_2)_j$ converges strongly to zero in $\vecL^2(\Omega)$.

Hence $T \left( (\vecf_1)_j,(\vecf_2)_j,g_j \right)$ converges strongly to zero in $  \vecH_p^1(\Omega) \times \vecL^2(\Omega) \times L^2(\Omega){\times H(C)}$. This proves the lemma.
\end{proof} 
The following corollary follows immediately from Lemma \ref{MLMRegularity} and compact operator theory. 
\begin{Coro}
For a fixed direction $\hat{\vecalpha}$, a fixed frequency $\omega$ and a fixed regularization wave vector $\vecbeta$, the eigenvalues $\eta$ corresponding to  \eqref{improved mixed 1}--\eqref{improved mixed 2} form at most a discrete set in $\mathbb{C}$.
\end{Coro}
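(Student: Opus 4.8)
The plan is to recast the eigenvalue problem as a standard eigenvalue problem for the compact solution operator $T$ of Lemma~\ref{MLMRegularity} and then to invoke the Riesz--Schauder spectral theory of compact operators. By the equivalence established earlier, the eigenvalues $\eta$ of the quadratic problem \eqref{improved mixed 1}--\eqref{improved mixed 2} coincide with those of the linearized problem \eqref{MengMonk2016MaxwellMixedQuadratic11}--\eqref{MengMonk2016MaxwellMixedQuadratic12}, so it suffices to work with the latter. Comparing \eqref{MengMonk2016MaxwellMixedQuadratic11}--\eqref{MengMonk2016MaxwellMixedQuadratic12} with the source problem \eqref{MengMonk2016MaxwellMixedQuadratic21}--\eqref{MengMonk2016MaxwellMixedQuadratic22}, I observe that a non-trivial triple $(\vecU,p,s)$ with $\vecU=(\vecu_1,\vecu_2)$ is an eigenfunction for $\eta$ precisely when the data of the source problem is taken to be $(\vecF,g)=(\vecU,p)$ scaled by $-\eta$; that is, $(\vecU,p,s)=-\eta\, T(\vecU,p)$, using linearity of $T$.

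First I would turn this identity into a genuine eigenvalue problem for a self-map. Writing $P$ for the bounded projection that sends the solution quadruple $(\vecu_1,\vecu_2,p,s)$ to its first three components $(\vecu_1,\vecu_2,p)$, I set $S=P\,T$ and note that the eigenrelation becomes $S(\vecu_1,\vecu_2,p)=\mu\,(\vecu_1,\vecu_2,p)$ with $\mu=-1/\eta$; the multiplier $s$ is then recovered uniquely from the source problem. The natural ambient space is $\vecH^1_p(\Omega)\times\vecL^2(\Omega)\times L^2(\Omega)$, and the crucial point is that the spaces match: by Lemma~\ref{MLMRegularity} the first component of the output of $T$ lies in $\vecH^1_p(\Omega)$, so every eigenfunction automatically has $\vecu_1\in\vecH^1_p(\Omega)$ and the data $(\vecU,p)$ fed back into $T$ belongs exactly to the domain on which $T$ is compact. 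No eigenpair is lost by restricting to this smaller space, and $S$ maps it into itself.

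With the self-map in place, $S=P\,T$ is compact as the composition of the compact operator $T$ (Lemma~\ref{MLMRegularity}) with the bounded projection $P$. I would then apply the spectral theory of compact operators: the spectrum of $S$ on the infinite-dimensional space $\vecH^1_p(\Omega)\times\vecL^2(\Omega)\times L^2(\Omega)$ consists of at most countably many eigenvalues of finite multiplicity whose only possible accumulation point is $0$. Consequently the set of admissible $\mu$ is discrete away from $0$, and since $\eta=-1/\mu$, the eigenvalues $\eta$ can accumulate only as $|\eta|\to\infty$, while the value $\mu=0$ corresponds to no finite $\eta$. Hence the $\eta$ form at most a discrete subset of $\C$, which is the claim.

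The main obstacle is the correct matching of function spaces in the second step, rather than the spectral theory itself. One must verify that recasting the problem through $T$ neither enlarges nor shrinks the eigenvalue set: that every eigenfunction carries the extra regularity $\vecu_1\in\vecH^1_p(\Omega)$ required to lie in the compactness domain of $T$ (which holds because eigenfunctions are outputs of $T$), and that the auxiliary multiplier $s$ together with the spurious value $\mu=0$ are handled so that the correspondence $\eta\leftrightarrow\mu=-1/\eta$ is a bijection between the finite eigenvalues $\eta$ and the non-zero spectrum of $S$.
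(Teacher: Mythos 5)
Your proposal is correct and follows essentially the same route as the paper: the paper simply notes that the corollary ``follows immediately'' from the compactness of the solution operator $T$ (Lemma~\ref{MLMRegularity}) together with compact operator theory, and your argument is a careful fleshing-out of exactly that---recasting the eigenrelation as $S(\vecu_1,\vecu_2,p)=\mu(\vecu_1,\vecu_2,p)$ with $S=P\,T$ compact, $\mu=-1/\eta$, and invoking Riesz--Schauder theory. Your attention to the bootstrapped regularity $\vecu_1\in\vecH^1_p(\Omega)$ of eigenfunctions and to the bijection between nonzero $\mu$ and finite $\eta$ supplies precisely the details the paper leaves implicit.
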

\section{Numerical Analysis}\label{num}
We use a straightforward finite element discretization of the linearized problem (\ref{MengMonk2016MaxwellMixedQuadratic11})--(\ref{MengMonk2016MaxwellMixedQuadratic12}). We use a periodic tetrahedral mesh of $\Omega$ and use $p$-degree edge elements of the second kind~\cite{ned86} to approximate
$\vecH_p (\mcurl ; \Omega)$, and the same space to approximate {the space  $\vecL^2 (\Omega)$ appearing in the definition of $\vecX(\Omega)$}. For the Lagrange multiplier we use $p+1$ degree continuous piecewise linear functions to approximate $H_p^1(\Omega)$. The choice of degree for the scalar space is dictated by~\cite{ned86}.  Because of limitations on memory in our desktop, we have only used $p=1,2,3$ in this paper. We choose $\tau$ as in Remark~\ref{assumption epsilon}. The resulting linear eigenvalue problem is approximated using the Arnoldi method~\cite{arnoldi}. All results were computed using 
Netgen/NGSolve~\cite{schoberl1997netgen} {both to generate the mesh and solve the eigenvalue problems via the NGSpy python interface}. {In practice we set $M=1$.}

As yet we have been unable to prove a convergence rate for the finite element approximation of our
method. In particular, in \cite{boffi2010finite}, the author discusses the analysis of eigenvalue problems using a mixed finite element formulations. Two types of problems are discussed. However the theory does not cover the type of  problem (\ref{MengMonk2016MaxwellMixedQuadratic11})-(\ref{MengMonk2016MaxwellMixedQuadratic12}).
 
We now present two examples.  The first has frequency independent parameters and allows us to validate our code against a more standard finite element method, while the second investigates a problem having a frequency dependetn coefficient.

\subsection*{Example 1:} Our first example uses a frequency independent choice of $\epsilon$.  Hence we can compute the Bloch variety either in the standard way by choosing $\veck$ and computing all relevant $\omega$, or using our new method by fixing $\omega$ and solving the linearized quadratic eigenvalue problem.

We start by using a standard edge element code to compute the Bloch variety via a standard eigenvalue problem, and then compare our results to these calculated eigenvalues. This example is motivated by one of the numerical experiments in \cite{dobson2000efficient} which in turn
is similar to an example in \cite{SH1993}.  The square rod structure for which the unit cell is shown in Fig.~\ref{fig1} (left panel)
consists of rods with $\epsilon=13$ surrounded by air with $\epsilon=1$.  We use the same volume ratio of $0.82$ (ratio between the volume of air and the total volume of the cell) as is used in
\cite{SH1993}.  Since $\epsilon$ is independent 
of $\omega$ we can approximate the eigenvalue problem using periodic edge elements with corresponding 
periodic $H^1$ elements to stabilize the problem (for a similar method see \cite{dobson2001analysis}).  We use first order
edge elements of the second kind and second order vertex elements.  The mesh size requested from the NGSolve mesh generator is 1/3.  We use an Arnoldi scheme to compute approximate eigenvalues.

The wave vector $\veck$ is defined in terms of a parameter $\alpha$ for $0\leq \alpha\leq 3\pi$ as follows
\begin{equation}
\veck=\left\{\begin{array}{ll}(\alpha,0,0)&\mbox{ for }0\leq \alpha \leq \pi\\
(\pi,\alpha-\pi,0)&\mbox{ for }\pi\leq \alpha \leq 2\pi\\
(\pi,\pi,\alpha-2\pi)&\mbox{ for }2\pi\leq \alpha \leq 3\pi\end{array}\right.\label{alphadef}
\end{equation}
For each $\alpha$ we compute the corresponding modes $\omega^2$ and plot the normalized frequency $\omega/(2\pi)$ against $\alpha$.
Our results are shown in Fig.~\ref{fig1} and can be compared to Fig. 8 in \cite{SH1993}.  The band gap is clearly visible, and the qualitative form of the diagram is the same as published work.  
There seems to be a mismatch in units on the y-axis perhaps due to a different scaling in  \cite{SH1993}.

\begin{figure}
\begin{center}
\begin{tabular}{cc}
\resizebox{0.4\textwidth}{!}{\includegraphics{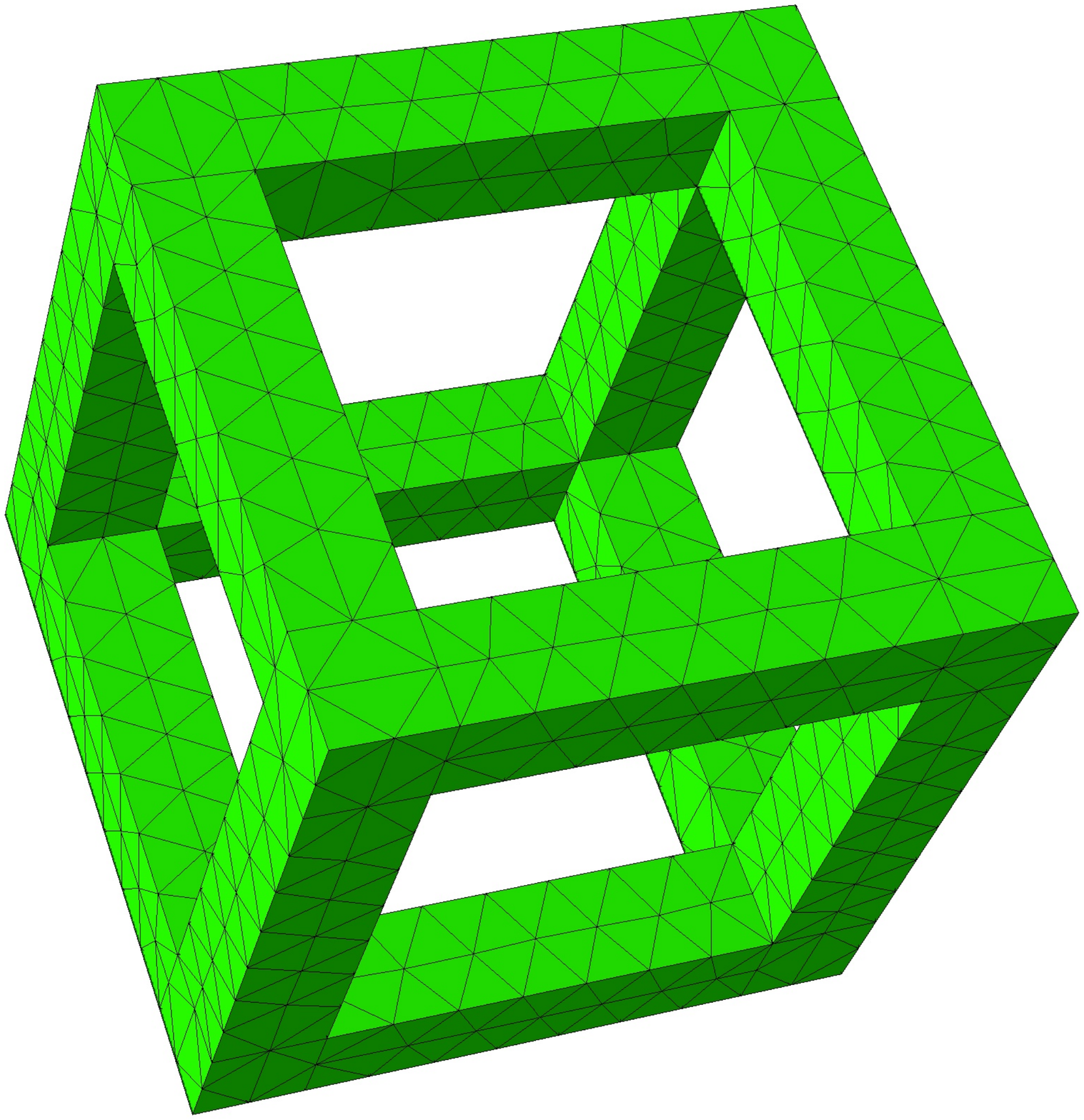}} & \resizebox{0.5\textwidth}{!}{\includegraphics{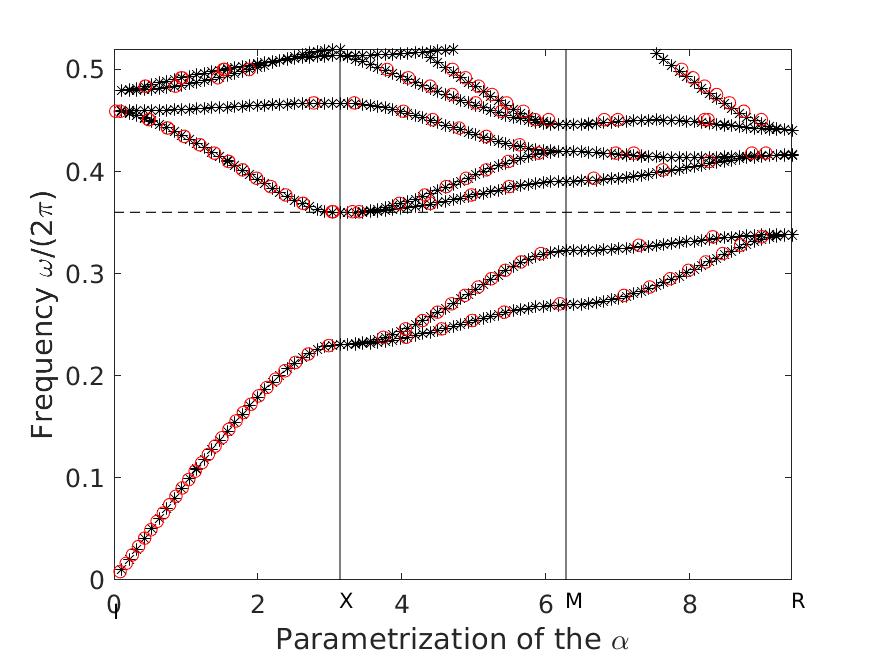}}
\end{tabular}
\end{center}
\caption{Results for the frequency independent choice of $\epsilon$ in Example 1. Left: The mesh in the rods in Example 1 (air is also filled by tetrahedra). Right:  The Bloch variety is computed
in two ways: using a ``standard'' approach by finding $\omega$ as a function of $\veck$ (and hence $\alpha$) shown with ($*$) and using our linearized quadratic eigenvalue solver shown in \textcolor{red}{$\circ$}. As expected there is agreement between the two approaches.}
\label{fig1}
\end{figure}

We then repeat the analysis of this problem using our linearized quadratic eigensolver based on 
(\ref{MengMonk2016MaxwellMixedQuadratic11})--(\ref{MengMonk2016MaxwellMixedQuadratic12}) using our quadratic edge elements and cubic vertex elements.  The mesh is the same as for the standard method discussed above.   Results are shown in Fig.~\ref{fig1}.  Clearly there is good agreement between the two methods {so either method can be used for frequency independent media}.  In practice, because of the larger size of our linearized quadratic eigenvalue problem, the ``standard'' approach is faster if there are no frequency dependent materials present.

\subsection*{Example 2:}  This example is motivated by a study in \cite[Section V]{toader2004photonic} where a frequency
dependent permittivity is considered.  The photonic crystal consists of a face centered cubic lattice of spheres (well known not to support band gaps).  The lattice constant (size of the unit cell) is denoted $a$ and each sphere has radius $r_{cp}=\delta a/(1\sqrt{2})$ where
$\delta\leq 1$ is a constant. Each sphere consists of a central spherical core of radius $0.9\,r_{cp}$ and is covered by a coating of thickness $0.1\,r_{cp}$. In \cite{toader2004photonic} the spheres are close packed so $\delta=1$, and they use
their ``cutting surface'' method together  with an approximation scheme that uses
a  plane wave basis, essentially expanding the fields in terms of a Fourier basis as in
(\ref{FB}).  To simplify mesh generation, we choose $\delta=0.9$.  Thus the results will not be exactly the same as those \cite{toader2004photonic}, but show a similar pattern. 

We start by using a frequency independent choice of $\epsilon$ (setting $\epsilon$ in the coating to that of the inner sphere).  Results are shown in Fig.~\ref{fig2}.  Clearly, as for Example 1, there is good agreement between the 
standard and linearized quadratic approaches.

Moving on to a frequency dependent coating we now use the choice of $\epsilon$ from \cite[Section V]{toader2004photonic}. Note that in   \cite[Section V]{toader2004photonic} the  frequency is measured in units of $2\pi$, so that the frequency-dependent dielectric constant in the coating of thickness $0.1\,r_{cp}$ in our setting is given by \
\begin{eqnarray}
\epsilon_{\mbox{\tiny coating}} (\omega) = \epsilon_1 + \Lambda^2 \frac{\omega_0^2-(\omega/2\pi)^2}{[\omega_0^2-(\omega/2 \pi)^2]^2+(\omega/2\pi)^2 \vecgamma_0^2},\label{freqdef}
\end{eqnarray}
where $\epsilon_1=7$, $\omega_0=0.489$, $\vecgamma_0=0.3$ and $\Lambda=\sqrt{1.9}$ are numerical parameters;
the dielectric constant in the spherical core of radius $0.9\,r_{cp}$ is given by $\epsilon_{\mbox{\tiny core}} = 1.592^2$; {outside the spheres} the dielectric constant is $1$. We use quadratic edge elements of the second kind to discretize the magnetic field, and cubic vertex elements to discretize the Lagrange multiplier. The mesh size requested from the mesh generator is 1/2.  Results are shown Fig.~\ref{fig2}. In that figure the vertical axis is $\omega/2\pi$, and the horizontal axis is $\alpha$ which defines the wave vector $\veck$ {by (\ref{alphadef}).}
Clearly there are differences between the {two frequency independent coefficient results in Fig.~\ref{fig2}, right panel, and  the frequency dependent coefficient results in Fig.~\ref{fig3},} as is to be expected.

\begin{figure}
\begin{center}
\begin{tabular}{cc}
\resizebox{0.4\textwidth}{!}{\includegraphics{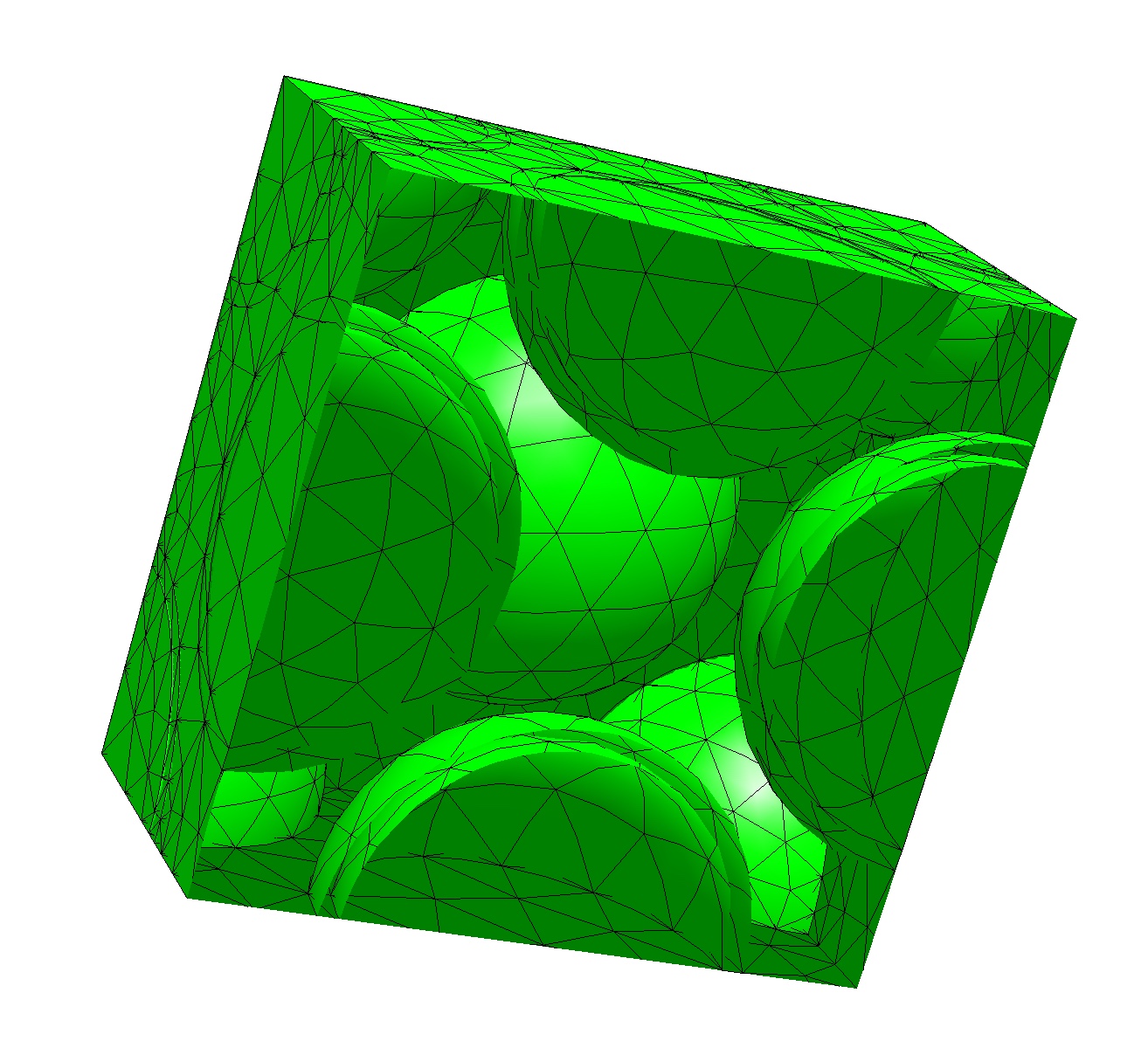}} & 
\resizebox{0.5\textwidth}{!}{\includegraphics{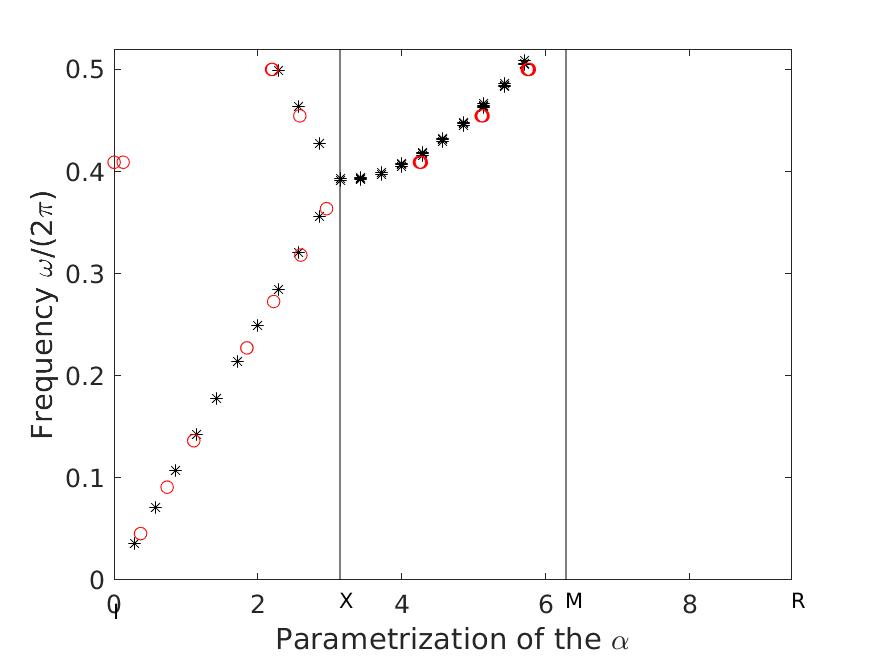}}
\end{tabular}
\end{center}
\caption{Left: A cross section of the mesh used for Example 2. Right panel: Eigenvalues for Example 2 using 
frequency independent parameters.  Points computed solving a standard finite element method are shown as ($*$) and using the linearized quadratic eigenvalue approach as (\textcolor{red}{$\circ$}). }
\label{fig2}
\end{figure}

\begin{figure}
\begin{center}
\resizebox{0.5\textwidth}{!}{\includegraphics{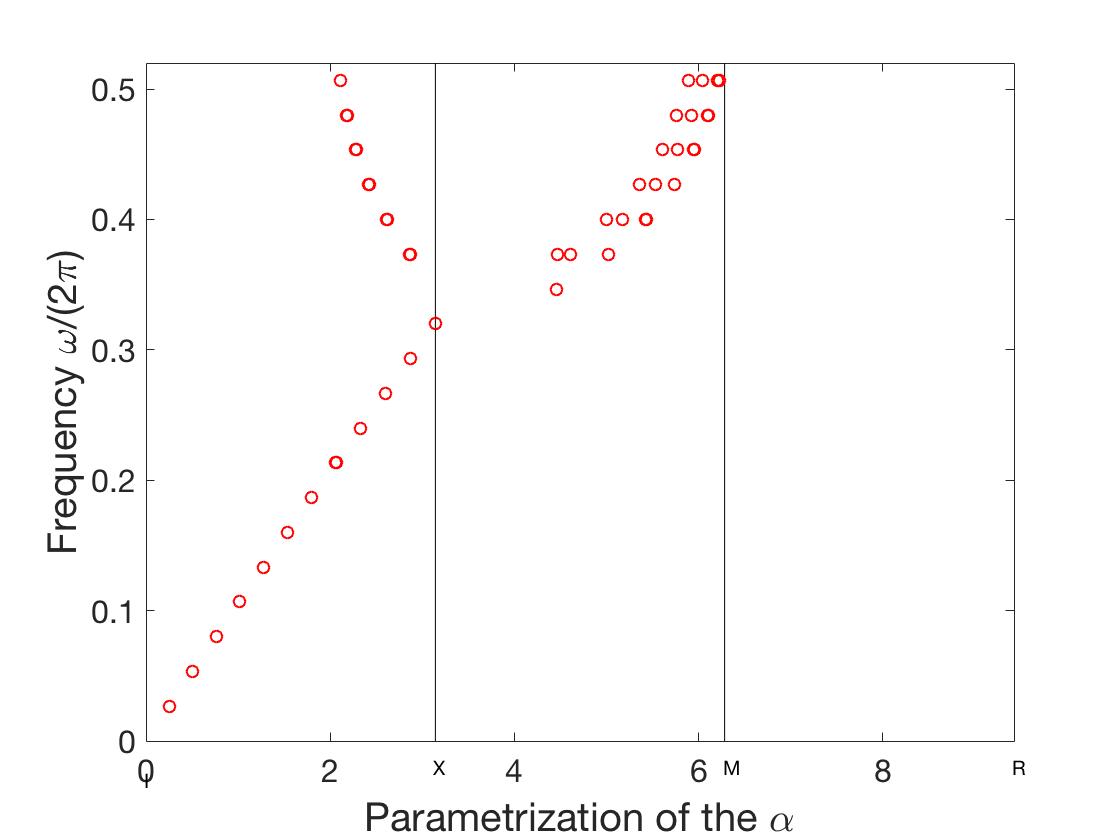}} 
\end{center}
\caption{Results for the frequency dependent coefficient defined in {(\ref{freqdef}) using the face centered lattice shown in the left panel of Fig.~\ref{fig2} and computed  using linearized quadratic eigenvalue approach} (\textcolor{red}{$\circ$}).  There are clear differences between the eigenvalues
in this figure and Fig.~\ref{fig2} right panel.}
\label{fig3}
\end{figure}
\subsection{Convergence Rate}
In our final study, we attempt to determine the convergence rate of our method.  We return to the frequency independent case in Example~1 and choose a specific point on the Bloch variety computed using the {standard approach (fixing $\veck$ and computing $\omega$)} with cubic edge elements and a fine mesh (mesh parameter $1/3$). {We} use this as the ``exact'' solution. In
particular we choose
\[
\veck=(\pi/2,0,0) \mbox{ and find } \omega=2\pi\,0.14492297.
\]
We then fix $\omega$ at the above value and solve the problem for $\veck$ using our linearized quadratic approach
with linear or quadratic edge elements.  By adjusting the mesh size requested from Netgen we can obtain different numbers of degrees of freedom and hence study the convergence as $N$, the number of degrees of freedom, increases.
Note that the meshes are not nested and therefore simply increasing $N$ may not result in a better solution (some points
in the graph are outliers).  Nevertheless the trend in Fig.~\ref{fig3} is clear.  For linear edge elements we are seeing first order convergence, while for quadratic edge elements we see quadratic convergence.  This is consistent with the expected convergence rate for a non-self adjoint eigenvalue problem using these {finite} elements.

\begin{figure}
\begin{center}
\resizebox{0.6\textwidth}{!}{\includegraphics{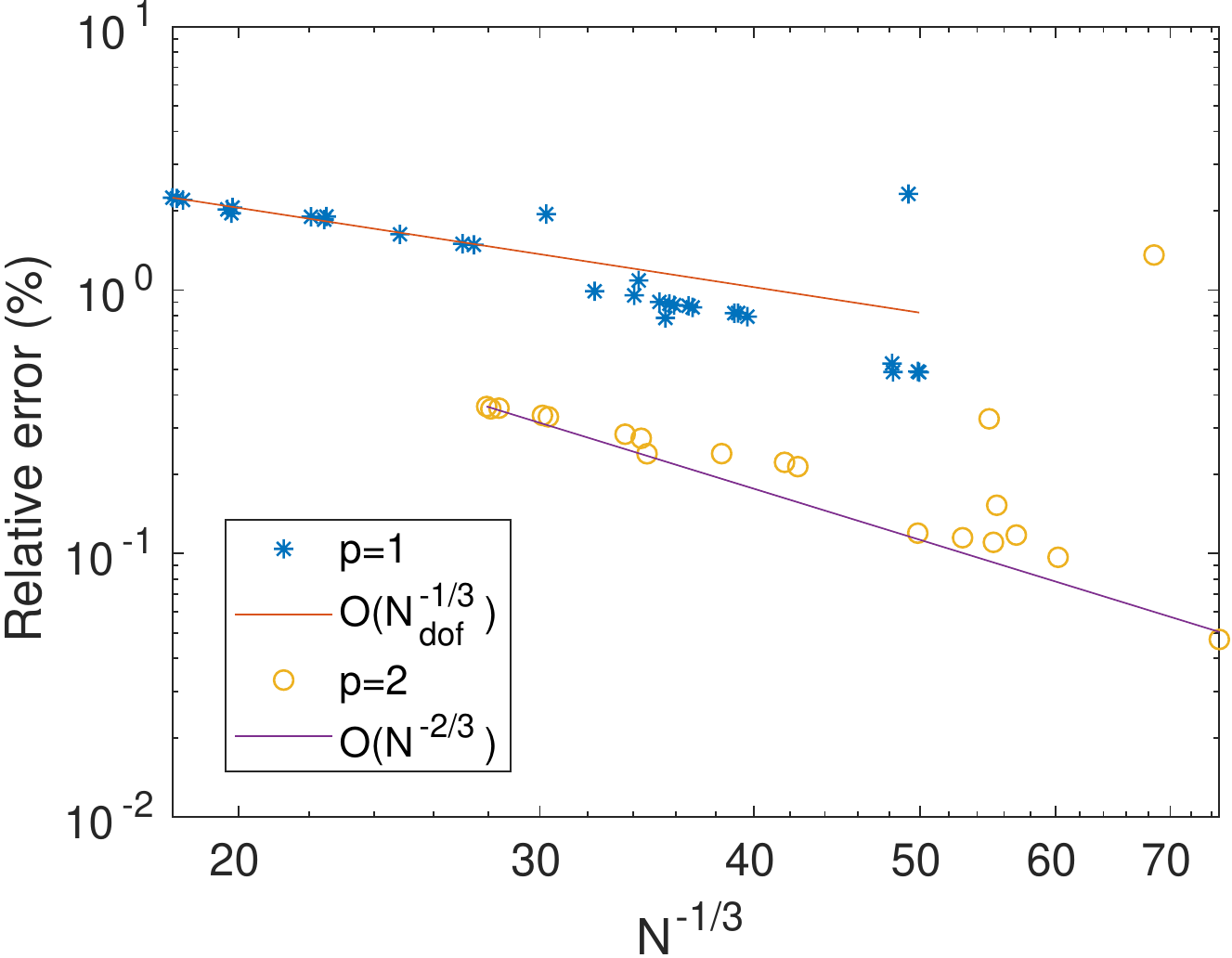}}
\end{center}
\caption{Relative error in the predicted wave vector {$\veck$} when $\omega=2\pi\,0.14492297$ against the reciprocal of the cube root of the number degrees of freedom in the problem. We show results for edge elements of  polynomial degree $p=1$ and $p=2$. Reference lines are $O(N^{-1/3})$ (linear convergence) and $O(N^{-2/3})$ (quadratic convergence).}
\label{errfig}
\end{figure}

\section{Conclusion}\label{concl}
We have shown that the problem of computing the Bloch variety for photonic crystals  having frequency dependent material
coefficients can be written as a quadratic eigenvalue problem in  a stable way.  The resulting problem can be linearized
and the Bloch variety can be found by computing the wave-vectors as a function of the angular frequency.

Much remains to be done: in particular convergence of the method has not been proved (although it is observed experimentally).   

\section*{Acknowledgements}
The research of  P.B. Monk was partially supported by the US National Science Foundation (NSF) under grant number DMS-1619904 and by the Air Force Office of Scientific Research (AFOSR) under award number FA9550-17-1-0147. 
 S. Meng was partially supported by the Air Force Office of Scientific Research under award FA9550-18-1-0131.
\bibliographystyle{plain}
\bibliography{ArxivBloch}

\end{document}